\newcommand{\SetMode}[1]{%
  \def\CurrentMode{#1}%
  \colorlet{Proof}{blue}
  \colorlet{Added}{red}
  \colorlet{Todo}{magenta}
  \ifstrequal{#1}{review}{%
    \def\ShowComments{1}
  }{}%
  \ifstrequal{#1}{final}{%
    \colorlet{Proof}{black}%
    \colorlet{Added}{black}%
    \colorlet{Todo}{black}%
    \def\ShowComments{0}
  }{}%
}
\newtheorem{definition}{Definition}
\newtheorem{assumption}{Assumption}
\newtheorem{lemma}{Lemma}
\newtheorem{theorem}{Theorem}
\newtheorem{remark}{Remark}
\newtheorem{corollary}{Corollary}
\newcommand{\R}{\mathbb{R}}
\newcommand{\Ub}{\mathbb{U}}
\newcommand{\Sc}{\mathcal{S}}
\newcommand{\Cc}{\mathcal{C}}
\newcommand{\Ac}{\mathcal{A}}
\newcommand{\Bc}{\mathcal{B}}
\newcommand{\Pc}{\mathcal{P}}
\newcommand{\Sb}{\mathcal{S}_{\rm b}}
\newcommand{\kb}{k_{\rm b}}
\newcommand{\hb}{h_{\rm b}}
\newcommand{\Fb}{F_{\rm b}}
\newcommand{\IbN}{\mathbb{I}_N}
\newcommand{\classK}{\mathcal{K}}
\newcommand{\classKK}{\mathcal{K}_\infty \mathcal{K}}
\newcommand{\classKinf}{\mathcal{K}_{\infty}}
\newcommand{\Added}[1]{\textcolor{Added}{#1}}      
\newcommand{\Deleted}[1]{%
  \ifnum\ShowComments=1%
    \textcolor{gray}{\sout{#1}}%
  \fi}
\newcommand{\Todo}[1]{%
  \ifnum\ShowComments=1%
    \textcolor{Todo}{[\textbf{TODO}: #1]}%
  \fi}
\NewDocumentEnvironment{roleblock}{m}
  {\begingroup\color{#1}}
  {\endgroup}
\title{\LARGE \bf
Uniform Feasibility For Smoothed Backup Control Barrier Functions
}
\author{
Anil~Alan$^1$,~Bart~De~Schutter$^1$ %
\thanks{
This research has received funding from the European Research Council (ERC) under the European Union's Horizon 2020 research and innovation programme (Grant agreement No. 101018826 - ERC Advanced Grant CLariNet).
$^1$A. Alan and $^1$B. De Schutter are with Delft Center for Systems and Control, Delft University of Technology, 2628 CD Delft, The Netherlands 
{\tt\small \{a.alan, b.deschutter\}@tudelft.nl}.
}%
}%
\begin{document}
\maketitle
\thispagestyle{empty}
\pagestyle{empty}

\begin{abstract}

We study feasibility guarantees for safety filters developed using Control Barrier Functions (CBFs) when a safe set is defined using the pointwise minimum of continuously differentiable functions, a construction that is common for the \emph{backup CBF (BCBF)} method and typically nonsmooth. We replace the minimum by its log--sum--exp (\emph{soft-min}) smoothing and show that, under a strict safety condition, the smooth function becomes a CBF (or extended CBF) for a range of the smoothing parameter. For compact safe sets, we derive an explicit lower bound on the smoothing parameter that makes the smooth function a CBF and hence renders the corresponding safety constraint feasible. For unbounded sets, we introduce tail conditions under which the smooth function satisfies an extended CBF condition uniformly. 
Finally, we apply these results to BCBFs. We show that safety of a compact (terminal) backup set under a backup controller, together with a condition ensuring safety of the backup trajectories on the relevant boundary of the safe set, is sufficient for constraint feasibility for BCBFs.
These results provide a recipe for a priori feasibility guarantees for smooth inner approximations of nonsmooth safe sets without the need for additional online certification.

\end{abstract}


\section{Introduction}
In safety-critical control, the objective is to provide a formal guarantee that, starting from a designated set, trajectories never leave this set during operation. Control Barrier Functions (CBFs) embed such guarantees as constraints in online optimization---often by penalizing the deviation from a desired control law, which leads to a design paradigm called safety filter \cite{ames2019control}. The practicality of this framework ultimately hinges on the feasibility of the resulting optimization problem, which in turn requires a certificate that the constraint function is a valid CBF.

Converse safety theorems study conditions such that a safe set admits a CBF, therefore they can be used to address the certification problem when the safe set is represented by a continuously differentiable function.
Converse results mostly rely on compactness assumption of the set, and span stability-like conditions \cite{prajna2005ontheNecessity}, separation certificate via Meyer functions \cite{wisniewski2016converse}, and robust variants \cite{ratschan2018converse}. Alternatives avoid compactness assumption at the expense of other restrictive conditions such as global Lipschitz constant \cite{liu2022converse} or bounded Lie derivatives \cite{gurriet2018towards}. Recently, a relaxed formulation of CBF, named extended CBF (eCBF), has been proposed to remove these assumptions from the converse side of necessity (of the existence of an eCBF), while retaining safety guarantees on the direct side of sufficiency \cite{mestres2024converse}.

While converse safety theorems enable CBF certification, they presuppose the existence of a control law that keeps the system within the set.
The \emph{backup control barrier function} (BCBF) framework provides a scalable {\color{black} method to construct such a set} \cite{gurriet2018online,chen2021backup,wabersich2021predictive,breeden2022predictive}. 
Here, a backup controller is used to propagate states forward into a smaller safe “backup (terminal) set,” and the set of all states that reach this backup set (at a terminal time) without leaving the safe region is implicitly defined via a minimum over time-indexed constraints, which leads to a nonsmooth.
Identifying a suitably small backup sets can be achieved {\color{black} offline} through Lyapunov equations, convex relaxations or semidefinite programming, e.g., via LMIs \cite{korda2013inner}.
The paradigm has been extended to data-driven settings via Koopman operators \cite{folkestad2020data}, to reduced-order models \cite{molnar2023safety}, to robust formulations \cite{van2024disturbance}, and to the multiple backup controllers setting \cite{janwani2024learning}. 
However, feasibility guarantees in these BCBF-based methods often rely on ad hoc assumptions—such as presuming the existence of suitable class-$\mathcal{K}$ bounds without constructive certificates \cite{gurriet2018online,chen2021backup}, or imposing global compactness \cite{gurriet2020scalable}, or bounded Lie derivatives \cite{rivera2024forward}—which limits systematic deployment.


\textbf{Contributions.} We bridge BCBFs with converse safety certification to find feasibility guarantees for CBF-based safety filters. 
We do this by smoothing the nonsmooth pointwise minimum via the soft-min (log--sum--exp) operator and provide a range of parameters such that converse safety theorems in \cite{mestres2024converse} can be employed for CBF/eCBF certification. In particular:
(i) For compact safe sets, uniform continuity can be invoked to provide CBF conditions, {\color{black} together with a formula to construct a class-$\mathcal{K}$ function guaranteeing the feasibility of the safety constraint}.
(ii) For noncompact sets, we introduce explicit tail conditions under which the soft-min uniformly satisfies an eCBF condition.
(iii) In the BCBF setting, we show that safety of a compact backup set under a backup controller, combined with a condition enforcing safety along the relevant boundary of the safe set, is sufficient to certify a CBF. This relaxes the global compactness assumption in \cite{gurriet2020scalable}.
While the smoothing idea also appeared in \cite{rabiee2025soft} for BCBFs, the approach of \cite{rabiee2025soft} requires computing an inner feasible subset online via an optimization problem for given design parameters. In contrast, our result provides an \emph{a priori} and \emph{uniform} feasibility guarantee that eliminates the need for additional online certification. {\color{black} A numerical example illustrates the resulting design procedure end-to-end.}

\textbf{Organization.} Section~\ref{sec:background} reviews notation, (e)CBFs, converse safety, and the BCBF setup. Section~\ref{sec:main} presents the soft-min construction and uniform thresholds for compact and noncompact cases. Section~\ref{sec:backup} implements these results for BCBFs. {\color{black} Section~\ref{sec:example} demonstrates the approach on a numerical example.} Section~\ref{sec:conclusion} concludes the paper.

\section{Background}
\label{sec:background}

\subsection{Notation}

We write $\IbN \triangleq \{1,\cdots,N\}$ for an integer $N \geq 1$.
The set of continuously differentiable functions is denoted by $C^1$.
For $h : \R^n \to \R$ and $h \in C^1$, its gradient is $\nabla h : \R^n \to \R^n$, \Added{and the transpose of its gradient is denoted as $\nabla^\top h$}.
A value $y \in \R$ is a \emph{regular value} of $h$ if $\nabla h(x) \neq 0$ for every $x$ with $h(x)=y$.
For a vector field $f : \R^n \to \R^n$, $L_f h (x) \triangleq \nabla^\top \! h (x) f(x)$ denotes the Lie derivative of $h$ with respect to (w.r.t.) $f$.
For a set $\Sc\subseteq \R^n$, $\partial \Sc $ and ${\rm Int}(\Sc)$ denote its boundary and interior.
A continuous function $\alpha : [0, a) \to \R_{\ge 0}$ belongs to class-$\mathcal{K}$ if it is strictly increasing and $\alpha(0) = 0$.  
If $a=\infty$ and $\alpha(r) \to \infty$ as $r \to \infty$, then $\alpha \in \classKinf$.
A continuous function $\gamma: \R_{\ge 0} \times \R_{\ge 0} \to \R_{\ge 0}$ belongs to class-$\classKK$ if $\gamma(\cdot,s) \in \classKinf$ for all $s \ge 0$ and $\gamma(r,\cdot) \in \classK$ for all $r\ge0$.

\subsection{Control Barrier Functions and Converse Safety Theorems}
Consider a system
\begin{equation}
\label{eq:system}
    \dot{x}=f(x,u), \quad x(0)=x_0,
\end{equation}
with $x\in\R^n$ and $u\in\Ub\subseteq\R^m$, where $f:\R^n\times\R^m\to\R^n$ is locally Lipschitz.
We aim at \emph{safety}, that the closed-loop system with a state-feedback controller $u=k(x)$ stays in a predefined set $\Sc$.
We will consider sets defined as  
\begin{equation}
\label{eq:Sc}
\begin{split}
    \Sc \triangleq \{ x\in\R^n ~|~ h(x)\geq0 \}, \quad h:\R^n\to\R, ~~ h \in C^1.
\end{split}
\end{equation}

Control Barrier Functions (CBFs) can be used to construct controllers with a safety guarantee.
\begin{definition}[CBF, \cite{ames2019control}]
\label{def:CBF}
A function $h\in C^1$ is a \emph{CBF} for the system \eqref{eq:system} if there exists a function $\alpha \in \classK$ satisfying:
\begin{equation}
  \sup_{u\in \Ub} \Big[ \underbrace{ \nabla^\top \! h(x) f(x,u)}_{L_fh(x,u)} \Big] \ge -\alpha(h(x)), \quad \forall x \in \Sc,
  \label{eq:cbf_ineq}
\end{equation}
where $L_fh$ denotes the Lie derivative of $h$ w.r.t. $f$.
\end{definition}

We will utilize a less restrictive version of CBF called \emph{extended CBF} to obtain more general results.
\begin{definition}[{Extended CBF, \cite{mestres2024converse}}]
\label{def:eCBF}
    A function $h \in C^1$ is an \emph{extended CBF} for \eqref{eq:system} if there exists a function $\gamma \in \classKK$:
    \begin{equation}
    \label{eq:eCBF}
        \sup_{u\in \Ub} \left[ L_fh(x,u) \right] \ge -\gamma\left( h(x), \|x\| \right) , \quad \forall x \in \Sc.
    \end{equation}
\end{definition}

CBFs and eCBFs imply that any locally Lipschitz continuous controller satisfying \eqref{eq:cbf_ineq} or \eqref{eq:eCBF} ensures safety, 
{ provided that 0 is a regular value of $h$} \cite{ames2019control,mestres2024converse}. 
One such controller that safeguards a desired (locally Lipschitz continuous) controller $k_{\rm des}$ is the \emph{safety filter}:
\begin{equation}
\begin{aligned}
k_{\mathrm{SF}}(x) = \underset{u \in \Ub}{\rm argmin}~\ & {\color{black} \|u - k_{\mathrm{des}}(x)\|_2^2} \\
\text{s.t.}\ & L_f h(x,u) \geq -\alpha(h(x)),
\end{aligned}
\label{eq:safetyfilter}
\end{equation}
where $\alpha(h(x))$ can be replaced with $\gamma(h(x),\|x\|)$ for eCBF, {\color{black} and $\|\cdot\|_2$ denotes the Euclidean norm}. 
While certification of $h$ as a CBF or an eCBF implies the feasibility of the safety constraint, the optimization problem \eqref{eq:safetyfilter} becomes a QP (with efficient real-time solvers available) if $f$ is affine-in-control and $\Ub$ can be represented with affine constraints.
Converse safety theorems certify $h$ as a CBF or an eCBF.
\begin{theorem}[Converse safety for compact sets, \cite{ames2019control}]
\label{theo:converse_compact}
    Let $\Sc$ be a compact set defined with $h\in C^1$ and let 0 be a regular value of $h$. If there exists a controller $k:\R^n\to\Ub$ such that $F(x)=f(x,k(x))$ is safe w.r.t. $\Sc$, then $h$ is a CBF.
\end{theorem}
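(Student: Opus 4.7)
The plan is to use the safe closed-loop vector field $F(x)=f(x,k(x))$ as a pointwise witness for the supremum in \eqref{eq:cbf_ineq}, and then construct a class-$\mathcal{K}$ function $\alpha$ that dominates $-L_Fh$ in terms of $h$ on the compact set $\Sc$. Since $k(x)\in\Ub$ yields $\sup_{u\in\Ub}L_fh(x,u)\ge L_Fh(x)$ everywhere on $\Sc$, it suffices to show that $L_Fh(x)\ge-\alpha(h(x))$ on $\Sc$ for some $\alpha\in\classK$.

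First I would invoke Nagumo's viability theorem on $\dot x=F(x)$: because $\Sc=\{h\ge0\}$ and $0$ is a regular value of $h$, the boundary $\partial\Sc=\{h=0\}$ is a smooth hypersurface, and safety of $\Sc$ under $F$ forces $L_Fh(x)\ge0$ for every $x\in\partial\Sc$. Consequently \eqref{eq:cbf_ineq} already holds on $\partial\Sc$ for any $\alpha$ with $\alpha(0)=0$.

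Next I would promote this boundary inequality to all of $\Sc$. Let $h_{\max}\triangleq\max_{x\in\Sc}h(x)$, which is finite by compactness, and define
\[
  \eta(r)\triangleq\max\bigl\{-L_Fh(x):x\in\Sc,\ h(x)\le r\bigr\}, \quad r\in[0,h_{\max}],
\]
where the max is attained because $-L_Fh$ is continuous and each set $\{x\in\Sc:h(x)\le r\}$ is compact. Then $\eta$ is nondecreasing, bounded above, and $\eta(0)\le0$ by the previous step. The remaining task is routine: pick any continuous, strictly increasing $\alpha:[0,\infty)\to\R_{\ge0}$ with $\alpha(0)=0$ that majorizes $\max\{0,\eta(\cdot)\}$ on $[0,h_{\max}]$ (e.g., a piecewise-linear envelope of $\max\{0,\eta\}$ plus an arbitrarily small linear term for strict monotonicity, extended linearly beyond $h_{\max}$). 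Such $\alpha$ lies in $\classK$ and yields
\[
  \sup_{u\in\Ub}L_fh(x,u)\;\ge\;L_Fh(x)\;\ge\;-\eta(h(x))\;\ge\;-\alpha(h(x))
\]
for all $x\in\Sc$, certifying $h$ as a CBF.

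The hard part will be the regularity of $\psi(x)\triangleq\sup_{u\in\Ub}L_fh(x,u)$: when $\Ub$ is unbounded, $\psi$ need not be continuous, so a direct sublevel-set argument on $\psi$ can fail. Routing the estimate through the continuous surrogate $L_Fh$ bypasses this issue. The regular-value hypothesis is essential in the Nagumo step: without it, $\partial\Sc$ might not be a smooth hypersurface and the equivalence between set invariance and $L_Fh\ge0$ on $\{h=0\}$ would break down.
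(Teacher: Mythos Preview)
The paper does not supply a proof of Theorem~\ref{theo:converse_compact}; it is quoted as a known result from \cite{ames2019control} and used as a black box in the proofs of Theorem~\ref{theo:compact} and Corollary~\ref{cor:softmin_bcbf}. So there is nothing in the paper to compare against.

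Your argument is essentially the standard proof of this converse statement and is correct in outline. Two small points are worth tightening. First, your compactness/continuity step requires $x\mapsto L_Fh(x)=\nabla^\top h(x)\,F(x)$ to be continuous, which in turn needs $F$ (hence $k$) to be continuous; the theorem as stated here does not make this explicit, but the paper's working hypothesis elsewhere is that $k$ is locally Lipschitz, so you should state that assumption when you invoke attainment of the maximum defining $\eta$. Second, in the Nagumo step you should also note that compactness of $\Sc$ (or another completeness hypothesis) is what guarantees forward existence of solutions on $[0,\infty)$, so that ``safe'' is meaningful and Nagumo's necessary condition applies on $\partial\Sc$. With those caveats, your route through the continuous surrogate $L_Fh$ and the monotone envelope $\eta$ is exactly the standard one and would be accepted.
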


\begin{theorem}[Converse safety for general case, \cite{mestres2024converse}]
\label{theo:converse}
    Let $\Sc$ be a (potentially unbounded) set defined with $h\in C^1$ and let 0 be a regular value of $h$. 
    If there exists a controller $k:\R^n\to\Ub$ such that the closed loop system $F$ is safe w.r.t. $\Sc$, then $h$ is an eCBF.
\end{theorem}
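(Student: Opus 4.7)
The plan is to reduce the existence of $\gamma \in \classKK$ in \eqref{eq:eCBF} to a two-parameter majorization of the worst-case decrease rate of $h$ along the closed-loop vector field $F(x) \triangleq f(x, k(x))$. The entry point is Nagumo's tangency condition: since $F$ renders $\Sc$ forward invariant and $0$ is a regular value of $h$, we must have $L_F h(x) \ge 0$ on $\partial \Sc = \{x : h(x) = 0\}$. Combined with $\sup_{u \in \Ub} L_f h(x, u) \ge L_F h(x)$, the task reduces to producing $\gamma \in \classKK$ such that $L_F h(x) \ge -\gamma(h(x), \|x\|)$ for every $x \in \Sc$.

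The first concrete step is to quantify, on bounded slabs of $\Sc$, how negative $L_F h$ can be. I would define
$$\psi(r, s) \triangleq \sup \bigl\{ \max(0, -L_F h(x)) : x \in \Sc,\ h(x) \le r,\ \|x\| \le s \bigr\}.$$
The set inside the supremum is compact (closed in $\R^n$ by continuity of $h$ and bounded by $\|x\| \le s$), and $L_F h$ is continuous, so $\psi$ is finite and nondecreasing in each argument. Setting $r = 0$ selects only the boundary $\partial \Sc \cap \{\|x\| \le s\}$, on which $L_F h \ge 0$ by Nagumo, giving $\psi(0, s) = 0$; continuity of $L_F h$ then yields $\psi(r, s) \to 0$ as $r \to 0^+$. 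By construction, $L_F h(x) \ge -\psi(h(x), \|x\|)$ on $\Sc$.

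The final step is to majorize $\psi$ by a continuous $\gamma \in \classKK$. I would first pass to a continuous, monotone upper envelope $\bar\psi \ge \psi$ and then add corrective terms, e.g., $\gamma(r, s) \triangleq \bar\psi(r, s) + r(1 + \sigma(s))$ for a suitable $\sigma \in \classK$ chosen large enough to absorb the $s$-growth of $\bar\psi$ on bounded $r$. A direct check then confirms $\gamma(\cdot, s) \in \classKinf$ and $\gamma(r, \cdot) \in \classK$, so $\gamma \in \classKK$. Chaining the inequalities gives $\sup_u L_f h(x, u) \ge L_F h(x) \ge -\gamma(h(x), \|x\|)$ on $\Sc$, which is precisely \eqref{eq:eCBF}.

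The hardest part will be the majorization step. The raw $\psi$ is only upper semicontinuous a priori and carries no built-in strict monotonicity or $\classKinf$ growth in $r$, so obtaining a $\gamma$ that is simultaneously continuous, strictly monotone in each argument, and unbounded in $r$ while still dominating $\psi$ pointwise requires care. Practical routes are to smooth $\psi$ via local maxima over shrinking neighborhoods or to replace it by a piecewise-linear upper envelope on a dyadic grid; the $C^1$ regularity of $h$ and local Lipschitz regularity of $F$ make these constructions feasible but tedious to verify in detail.
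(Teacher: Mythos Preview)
This theorem is not proved in the present paper; it is quoted verbatim from \cite{mestres2024converse} as background (Theorem~\ref{theo:converse}), so there is no in-paper proof to compare against. What follows is an assessment of your sketch on its own merits.

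The overall architecture---Nagumo on $\partial\Sc$, the worst-case envelope $\psi(r,s)$ over compact slabs $\{h\le r,\ \|x\|\le s\}$, then majorization by a $\classKK$ function---is the natural route and is essentially how such converse results are built. The compactness argument for finiteness of $\psi$ and the limit $\psi(r,s)\to 0$ as $r\to 0^+$ are fine (modulo the implicit assumption that $k$, hence $F$ and $L_Fh$, is continuous, which the theorem statement does not literally provide).

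There is, however, a genuine gap in your majorant. The paper's definition of $\classKK$ requires $\gamma(r,\cdot)\in\classK$ for every $r\ge 0$, hence $\gamma(r,0)=0$. Your candidate $\gamma(r,s)=\bar\psi(r,s)+r\bigl(1+\sigma(s)\bigr)$ gives $\gamma(r,0)\ge r>0$ for $r>0$, so it is not in $\classKK$. Swapping the corrective term for $r\,\sigma(s)$ does not help either: if $0\in{\rm Int}(\Sc)$ then $\psi(r,0)=\max\{0,-L_Fh(0)\}$ for all $r\ge h(0)$, and nothing in the hypotheses forces $L_Fh(0)\ge 0$ at an interior point. More broadly, any argument that lower-bounds $\sup_{u}L_fh(x,u)$ solely by $L_Fh(x)$ is forced, at $x=0$, to deliver $L_Fh(0)\ge -\gamma(h(0),0)=0$, which safety alone does not guarantee. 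You therefore need either to invoke the exact comparison-function class used in \cite{mestres2024converse} (which may not impose $\gamma(r,0)=0$), or to treat the origin separately, e.g.\ by shifting the second argument to $1+\|x\|$ before constructing $\psi$ and then undoing the shift in the final $\gamma$.
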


\begin{remark}
    The benefit of eCBFs with a more general class of functions $\gamma\in \classKK$ over $\alpha\in\classK$ make them more suitable for unbounded sets.
    For example, even when $L_Fh(x)$ becomes arbitrarily negative as $\|x\|\to\infty$, it is still plausible to construct a lower bound $\gamma (\cdot,\|x\|)$ using $L_Fh$ thanks to the second term.
    However, missing this term, a CBF candidate may fail to admit a function $\alpha\in \classK$, see Example IV.1 in \cite{mestres2024converse}. 
\end{remark}

\subsection{Backup Control Barrier Functions (BCBF)}
\label{subsec:backup}
While converse safety theorems give us strong theoretical guarantees to certify \emph{any} $h$ defining $\Sc$ as a CBF or an eCBF, they essentially require the existence of a control law rendering $\Sc$ safe. 
A solution is proposed in \cite{gurriet2018online}.
This idea, which will be called \emph{backup control barrier functions (BCBF)} in this paper, relies on constructing a (larger) safe set using finite-time safe trajectories that evolve within a (smaller) safe set $\Sb \subseteq \Sc$. 

Consider $\hb : \R^n \to \R$, $\hb \in C^1$ defining a set $\Sb = \{ x ~|~ \hb(x) \ge 0 \}$, and the closed-loop system with a locally Lipschitz continuous backup controller $\kb : \R^n \to \Ub $:
\begin{equation}
\label{eq:system_closed_backup}
    \dot{x} = \Fb(x) \triangleq f(x,\kb(x)), ~~x(0)=x_0. 
\end{equation}
Solution trajectories to \eqref{eq:system_closed_backup}, denoted as $\phi(x_0,\tau)$, can be used over a finite horizon $\tau \in [0,T]$ with a $T\geq0$ to construct
\begin{equation}
\label{eq:ST}
    \Sc_T \!=\! \big\{ x \in \R^n ~|~ h_T(x) \ge 0  \big\},
\end{equation}
where $
\label{eq:h_T}
    h_T(x) \triangleq \min \left\{ \min_{\tau \in [0,T]} h( \phi(x, \tau) ) , \hb( \phi(x,T) ) \right\}.
$
The set $\Sc_T$ contains all points $x \in \Sc$ such that, starting from $x$ at $\tau = 0$, the system reaches $\Sb$ by $\tau = T$ without leaving $\Sc$ at any $\tau \in [0,T]$ under the backup controller. 
Therefore, if $\Sb$ is a safe set, so is $\Sc_T$, see \cite{gurriet2020scalable,molnar2023safety} for a more formal statement.

The set $\Sc_T$ can potentially be used in converse safety theorems to certify $h_T$ as a CBF or an eCBF, which then guarantees the feasibility of the safety constraint with $h_T$. However, Theorems~\ref{theo:converse_compact}-\ref{theo:converse} require a continuously differentiable constraint function, whereas $h_T$ is not due to the minimum.
To solve this problem, we will use a smooth inner approximation of $\min$ functions in $h_T$ in the next section.
Before that, we use discretization for the minimum in $h_T$ over a continuous time interval $[0,T]$.
In particular, consider a discretization $\tau_i \in \{0, \Delta \tau, \cdots , T\} $ with a uniform interval $\Delta \tau > 0$ and $i \in \IbN$ with $N = T / \Delta \tau + 1$. 
Discretization leads to an approximation of ${\Sc}_T$ defined with
\begin{equation}
\label{eq:bmin}
    \hat \Bc  \triangleq \{x \in \R^n ~|~ \hat b(x)\ge0\}, \quad \hat b(x)  \triangleq \min_{i \in \IbN} b_i(x),
\end{equation} 
where
\begin{align}
    \label{eq:BCFB_bi}
    b_i(x) \triangleq \begin{cases}
    h\left( \phi(x,\tau_i) \right), &~{\rm if}~ i \in \IbN \setminus \{N\}, \\
    \hb \left( \phi(x,T) \right), &~{\rm if}~ i = N.
    \end{cases}
\end{align}
We can replace the safety constraint of \eqref{eq:safetyfilter} with:
\begin{equation}
    \label{eq:BCBF_condition}
    L_fb_i(x,u) \geq -\alpha_i \left( b_i(x) \right), \quad \forall i \in \{1,\cdots,N\}.
\end{equation}
with $\alpha_i \in \classK$ for the CBF case and a $\gamma_i \in \classKK$ for the eCBF case. 
While constraints \eqref{eq:BCBF_condition} are sufficient for safety\footnote{Technically, discretization yields a relaxation of the original safe set. While it is possible to shift the safety guarantee to $\hat{\Bc}$ by tightening  constraints against imperfections emerging from discretization \cite{gurriet2020scalable}, we skip these details here to keep the focus on the merits of the approach presented.} \cite{he2025predictive}, their feasibility depends on whether $\hat b$ can be certified as a CBF or an eCBF.

Our main goal in this paper is to obtain feasibility guarantees for the safety constraint of CBF-based controllers such as \eqref{eq:safetyfilter} when the constraint is constructed as smoothed inner approximation of pointwise minimum of subconstraint functions. 
In the next section we set up this problem for a general setting and provide solutions for two separate cases: first, when the set $\hat \Bc$ is compact (using Theorem~\ref{theo:converse_compact}); then for a general case (using Theorem~\ref{theo:converse}). 
Finally, we will investigate how these conditions translate back to the BCBF framework in Section~\ref{sec:backup}.

\section{Main Results}
\label{sec:main}

\subsection{Problem Formulation}
\label{subsec:problem}

Let a set $\hat \Sc$ be defined as
\begin{equation}
\label{eq:Shat}
    \hat \Sc=\{x\in\R^n ~|~ \hat h(x) \ge 0\}, \quad \hat h(x) \triangleq \min_{i \in \IbN} h_i(x),  
\end{equation}
with functions $h_i:\R^n\to\R$ and $h_i \in C^1$ for all $i \in \IbN$, and an integer $N \ge 1$.
The \emph{active} and \emph{inactive set} of indices at a point $x \in \hat \Sc$ are denoted with  $\Ac(x) \triangleq \{i \in \IbN ~|~ h_i(x)=\hat h(x)\}$ and $\Pc(x) \triangleq  \IbN \setminus \Ac(x)$. 
%
We assume that the Mangasarian-Fromovitz constraint qualification (MFCQ) holds for $\partial \hat \Sc$, i.e., there exists a $v\in\R^n$ such that
\begin{equation}
\label{eq:MFCQ}
\nabla^\top \! h_i(x) \,v > 0, \quad \forall i\in \mathcal A(x), ~~\forall x\in\partial \hat \Sc.
\end{equation}
Note that this assumption is a geometric regularity condition on $\partial \hat \Sc$, and it is a property of the set. It guarantees that, at all points on the boundary, the active constraint surfaces meet in a non-degenerate way, which then ensures that the contingent tangent cone\footnote{%
{
$T_{\hat \Sc}(x)$ denotes the contingent cone of a set $\hat \Sc \subseteq \R^n$ at a point $x\in\R^n$, i.e., the set of all directions $v\in\R^n$ for which there exist sequences \Added{for $k\to\infty$}, $t_k\downarrow 0$ and $x_k\in\hat{\mathcal S}$ with 
$x_k\to x$ and $(x_k - x)/t_k \to v$.}%
} 
of $\hat \Sc$ at a point $x\in \partial \hat \Sc$ is full-dimensional and has the representation
\begin{equation}
T_{\hat \Sc}(x) = \{ v\in\R^n ~|~ \nabla^\top \! h_i(x) \,v \ge 0, ~~\forall i\in\Ac(x)\}.
\end{equation}
When $h_i \in C^1, ~\forall i \in \IbN$, MFCQ is satisfied whenever all the active gradients $\nabla h_i$ do not point in exactly opposite directions, so a common direction can increase all active constraints simultaneously. 

We are interested in a case where a locally Lipschitz continuous controller $k:\R^n\to\Ub$ endows the system 
\begin{equation}
\label{eq:system_closed}
    \dot{x}=F(x) = f(x,k(x)),  \quad x(0)=x_0,
\end{equation}
with a safety guarantee w.r.t. $\hat{\Sc}$.
Before introducing the problem setting, we formally define safety. 
\begin{definition}[Strict safety]
\label{def:safety}
    A system \eqref{eq:system_closed} is strictly safe w.r.t. a set $\hat \Sc$ if
    \begin{equation}
    \label{eq:safety}
    \forall x_0 \in \hat\Sc \implies x(t) \in {\rm Int}(\hat\Sc), ~\forall t>0.
    \end{equation}
\end{definition}
It is noted that the safety definition in Definition~\ref{def:safety} is a stricter notion than the safety definition commonly used within the CBF framework \cite{ames2019control}.
In particular, { inspired by the \emph{strict invariance} in \cite{aubin1991viability}, we consider cases where any trajectory on the boundary cannot stay on the boundary, and it is strictly pushed inside.
Defined on a stricter condition, Theorems~\ref{theo:converse_compact}-\ref{theo:converse} are still applicable for the safety definition in Definition~\ref{def:safety}.}

According to the strict invariance result in Aubin~\cite[$\S4.3$]{aubin1991viability}, 
$F(x)$ must belong to the \emph{relative interior} of the contingent cone $T_{\hat \Sc}(x)$ for all $x \in \partial \hat \Sc$ to ensure strict safety.
When $\hat \Sc$ is defined as in \eqref{eq:Shat}, and under MFCQ, the strict invariance condition becomes equivalent to \cite{clarke1998nonsmooth}
\begin{equation}
\label{eq:safetyassumption}
    L_F h_i(x) > 0, 
    \quad \forall i \in \mathcal A(x),\ \forall x \in \partial \hat{\mathcal S}.
\end{equation}

\noindent {\bf Soft-min:} To employ converse safety theorems, we construct a smooth inner approximation of $\hat \Sc$ using the soft-min (or log-sum-exp minimum) function:
\begin{equation}
    \label{eq:Stilde}
    \tilde{\Sc}_\theta \!=\! \{ x\in\R^n ~|~ \tilde{h}_\theta (x) \!\ge\! 0 \}, ~\tilde{h}_\theta (x) \!\triangleq\! -\frac{1}{\theta} \log \left( \sum_{i=1}^N {\rm e}^{-\theta h_i(x)} \right),
\end{equation}
where $\theta > 0$ is a parameter adjusting the transition between two extreme cases: $\theta \to 0$ yielding equal contribution from all $h_i$, and $\theta \to \infty$ returning to only active constraints.  
Since $h_i \in C^1$ for all $i \in \IbN$, we can find the gradient:
\begin{equation}
    \label{eq:grad_htilde}
    \nabla \tilde h_\theta (x) = \sum_{i=1}^N w_{\theta,i}(x) \nabla h_i (x), ~w_{\theta,i}(x) \triangleq \frac{ {\rm e}^{-\theta h_i(x)} }{ \sum_{j=1}^N {\rm e}^{-\theta h_j(x)} },
\end{equation}
where $w_{\theta,i}:\R^n \to (0,1)$ are weights of how much each gradient $\nabla h_i$ contributes to $\nabla \tilde h_\theta$. 
Note $\sum_{i \in \IbN} w_{\theta,i}(x) = 1$ for all $x \in \hat\Sc$. 

\noindent {\bf Problem:} Given that \eqref{eq:safetyassumption} holds {(implied by strict safety w.r.t. $\hat \Sc$ and MFCQ),} our particular goal is to show that there exists a $\theta$ such that 
we can use Theorems~\ref{theo:converse_compact}-\ref{theo:converse} to certify $\tilde h_\theta$ in \eqref{eq:Stilde} as a CBF or an eCBF.
More precisely, we look for a uniform $\theta^* > 0$ such that the following holds
\begin{equation}
\label{eq:maingoal}
    \nabla^\top \! \tilde h_\theta(x) F(x) = L_F \tilde h_\theta(x) > 0, \quad \forall x \in \partial \tilde \Sc_\theta, ~\forall \theta > \theta^*.
\end{equation}
The strict positiveness of the Lie derivative $L_F \tilde h_\theta$ in \eqref{eq:maingoal} implies that $\nabla \tilde h_\theta(x) \!\neq\! 0$ for all 
$x\in\partial \tilde \Sc_\theta$; so $0$ is a regular value of $\tilde h_\theta$, and $\partial \tilde \Sc_\theta$ is 
a $C^1$ hypersurface. Therefore, \eqref{eq:maingoal} becomes sufficient for Aubin’s strict invariance condition \cite{aubin1991viability}, and Theorems~\ref{theo:converse_compact}-\ref{theo:converse} apply.

We address the problem of obtaining \eqref{eq:maingoal} by considering two cases separately. First, when $\hat{\mathcal{S}}$ is compact, we establish the existence of a $\theta^*$ without requiring any additional assumptions. Subsequently, we examine the case in which $\hat{\mathcal{S}}$ is unbounded and show that a uniform $\theta^*$ can still be obtained, provided that additional conditions on the subconstraint functions $h_i$ and the closed-loop vector field $F$ are satisfied.

\begin{remark}
\label{rem:conservativeness}
{\color{black}
The soft-min smoothing yields an inner approximation $\tilde \Sc_\theta \subseteq \hat \Sc$, and the gap can be quantified by the two-sided bound $\hat h(x) - \frac{\log N}{\theta} \le \tilde h_\theta(x) \le \hat h(x), \quad \forall x \in \hat \Sc$.
So, the loss in the certified safe set vanishes as $\theta \to \infty$. While this conservativeness is the price paid for replacing the nonsmooth $\hat h$ with a $C^1$ function, it is precisely this smoothness that enables the feasibility guarantees developed in the next section. 
}
\end{remark}

\begin{remark}
    While both our approach and that of \cite{rabiee2025soft} employ the same type of smooth approximation \eqref{eq:Stilde} for the minimum operator \eqref{eq:Shat}, the nature of the resulting feasibility guarantees differs.
    In \cite{rabiee2025soft}, feasibility of the safety filter is only ensured on a subset $\Gamma \subseteq \tilde \Sc_\theta$, where $\Gamma$ is determined through an auxiliary optimization problem.
    In our work, we instead derive conditions under which feasibility holds on $\tilde \Sc_\theta$, and these conditions are obtained prior to implementation, without the need for additional online certification.
\end{remark}

\subsection{Compact Sets}
\label{sec:compact}

In this part, we will consider the case that $\hat \Sc$ is a compact set. We start by defining an $\varepsilon$-tube around $\partial \hat \Sc$ as
\begin{equation}
    \label{eq:tube}
    T_\varepsilon = \{ x \in \hat{\Sc} ~|~ 0 \le \hat h(x) \le \varepsilon \}.
\end{equation}
Note that, for $\varepsilon>0$, we have $T_\varepsilon \subseteq \hat \Sc$.
A property of soft-min functions is a lower bound: 
\begin{equation}
\label{eq:softmin_bounds}
    \hat h(x)-\frac{\log N}{\theta} \le \tilde h_\theta(x), \quad \forall x \in \hat \Sc, ~\forall \theta > 0.
\end{equation}
The lower bound in \eqref{eq:softmin_bounds} implies the existence of a $\theta$ such that the boundary of $\tilde \Sc_\theta$ is \emph{fully contained} in the $\varepsilon$-tube around $\partial \hat \Sc$; to be precise, $\partial \tilde \Sc_\theta \subset T_\varepsilon \subseteq \hat \Sc$ for all $\theta \ge \tfrac{\log N}{\varepsilon}$.
Therefore, given a $\theta \ge \tfrac{\log N}{\varepsilon}$, showing that $L_F \tilde h_\theta(x)>0$ holds for all $x\in T_\varepsilon$ is \emph{sufficient} for \eqref{eq:maingoal}.
Furthermore, since $\hat \Sc$ is compact, so are $T_\varepsilon$ and $\partial \tilde \Sc_\theta$ for an $\varepsilon>0$ and a $\theta > 0$.

We can decompose the Lie derivative in \eqref{eq:maingoal}:
\begin{equation}
\label{eq:decomposition}
\begin{alignedat}{1}
    L_F \tilde h_\theta(x) \!=\! \sum_{\!\!\!i \in \Ac(x)\!\!} w_{\theta,i}(x) L_Fh_i(x) \!+\! \sum_{\!\!\!j \in \Pc(x)\!\!} w_{\theta,j}(x) L_Fh_j(x),
\end{alignedat}
\end{equation}
where $w_{\theta,(\cdot)}$ are the weights given as in \eqref{eq:grad_htilde}, characterizing how much the Lie derivative of each subconstraint contributes to the total Lie derivative $L_F \tilde h_\theta$.
Notice that the summation is separated between active and inactive constraints, cf. \eqref{eq:grad_htilde}, as this will be useful to check $L_F \tilde h_\theta(x)>0$ for all $x\in T_\varepsilon$. 
In particular, in order for the strict positiveness to hold, we need certain bounds on the terms in \eqref{eq:decomposition}, which are presented in the next lemma. 
Before presenting this, consider the following notation defining the gap between a subconstraint function $h_j$ and the pointwise minimum: $\delta_j(x) \triangleq h_j(x) - \hat h(x)$. Notice that by definition we have $\delta_i(x) = 0$ for all active constraints $i \in \Ac(x)$, whereas $\delta_j(x) > 0$ for inactive ones $j \in \Pc(x) $.

\begin{lemma}
\label{lem:compact}
Let $\hat \Sc$ defined as in \eqref{eq:Shat} be compact with $h_i \in C^1, ~\forall i \in \IbN$ with $N \ge 1$, {and assume MFCQ holds for $\partial \hat \Sc$}. If the system \eqref{eq:system_closed} is {strictly} safe w.r.t. $\hat \Sc$, then there exists a $T_\varepsilon\subseteq \hat \Sc$ with an $\varepsilon > 0$ such that following conditions hold $\forall x \in T_\varepsilon$ with bounds $M,r,d>0$ 
\begin{align}
\label{eq:lemma_compact_1}
    \max_{i \in \IbN}  \left| L_Fh_i(x) \right|  &\leq M, \\
\label{eq:lemma_compact_2}
    \min_{i \in \Ac(x)}  L_Fh_i(x) &\geq r > 0, \\
\label{eq:lemma_compact_3}
    \min_{j \in \Pc(x)} \delta_j(x) &\geq d > 0,
\end{align}
\end{lemma}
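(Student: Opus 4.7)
The plan is to derive all three bounds from compactness of $T_\varepsilon$ and continuity of the data, with \eqref{eq:lemma_compact_2} and \eqref{eq:lemma_compact_3} additionally exploiting the strict safety hypothesis \eqref{eq:safetyassumption}. I first choose $\varepsilon>0$ small enough that $T_\varepsilon$ fits inside a convenient finite open cover of $\partial\hat\Sc$, and then obtain each bound as the finite minimum of local bounds over that cover.

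For \eqref{eq:lemma_compact_1}, $F=f(\cdot,k(\cdot))$ is continuous (local Lipschitz of $f$ and $k$), so each $L_F h_i = \nabla^\top \! h_i \, F$ is continuous since $h_i\in C^1$. Taking the pointwise maximum of the $N$ continuous functions $|L_F h_i|$ over the compact set $T_\varepsilon$ yields the uniform bound $M$.

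For \eqref{eq:lemma_compact_2}, I use the upper semicontinuity of the active-set map: for each $x^\ast\in\partial\hat\Sc$ and each $j\notin\Ac(x^\ast)$, the strict inequality $h_j(x^\ast)>\hat h(x^\ast)$ persists in a neighborhood of $x^\ast$ by continuity of the $h_i$, so $\Ac(y)\subseteq\Ac(x^\ast)$ for $y$ close to $x^\ast$. Strict safety gives $L_F h_i(x^\ast)>0$ for every $i\in\Ac(x^\ast)$, and continuity of $L_F h_i$ propagates this to $L_F h_i(y)\ge\tfrac12 r(x^\ast)$ on a neighborhood $V(x^\ast)$, where $r(x^\ast):=\min_{i\in\Ac(x^\ast)}L_F h_i(x^\ast)>0$. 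In particular, the bound holds for every $i\in\Ac(y)\subseteq\Ac(x^\ast)$. Compactness of $\partial\hat\Sc$ yields a finite subcover $\{V(x^\ast_k)\}_k$, and I set $r:=\tfrac12\min_k r(x^\ast_k)>0$. Shrinking $\varepsilon$ so that $T_\varepsilon\subseteq\bigcup_k V(x^\ast_k)$ is possible because $\hat h$ is continuous and strictly positive on the compact set $\hat\Sc\setminus\bigcup_k V(x^\ast_k)$, hence attains a positive minimum there.

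For \eqref{eq:lemma_compact_3}, I localize continuity of the gap functions $\delta_j=h_j-\hat h$. At each $x^\ast\in\partial\hat\Sc$ and $j\in\Pc(x^\ast)$, the gap $\delta_j(x^\ast)>0$, and continuity gives $\delta_j(y)\ge\tfrac12\delta_j(x^\ast)$ on a neighborhood $V(x^\ast)$ (shrinking the one from step \eqref{eq:lemma_compact_2} if necessary). Taking $d$ as the finite minimum of $\tfrac12\delta_j(x^\ast_k)$ over $j\in\Pc(x^\ast_k)$ and $k$ gives a uniform positive gap for those indices. The hard part will be that on $V(x^\ast_k)$, an index $j$ may jump from $\Ac(x^\ast_k)$ into $\Pc(y)$ while carrying arbitrarily small $\delta_j(y)$; the cover construction renders this case benign because such $j$ inherit the strictly positive Lie-derivative bound from step \eqref{eq:lemma_compact_2} and can be absorbed into the active-index treatment in later decompositions of $L_F\tilde h_\theta$. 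I expect this reclassification step, together with the joint choice of $\varepsilon$ making the three bounds simultaneous on $T_\varepsilon$, to require the most care in writing the proof out in detail.
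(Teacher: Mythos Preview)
Your overall strategy matches the paper's: compactness of $T_\varepsilon$ plus continuity for \eqref{eq:lemma_compact_1}, and strict safety on $\partial\hat\Sc$ extended by continuity for \eqref{eq:lemma_compact_2}--\eqref{eq:lemma_compact_3}. Where you use a finite subcover together with upper semicontinuity of $x\mapsto\Ac(x)$, the paper packages the same idea as a minimum over the graphs $\Gamma=\{(x,i):x\in\partial\hat\Sc,\ i\in\Ac(x)\}$ and $\Lambda=\{(x,j):x\in\partial\hat\Sc,\ j\in\Pc(x)\}$, which it asserts to be compact; for \eqref{eq:lemma_compact_2} these are equivalent arguments.

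You have, however, put your finger on a genuine gap that the paper's own proof glosses over. Whenever some $x^\ast\in\partial\hat\Sc$ has $|\Ac(x^\ast)|\ge 2$, an index $j\in\Ac(x^\ast)$ can lie in $\Pc(y)$ for $y\in T_\varepsilon$ arbitrarily close to $x^\ast$ with $\delta_j(y)\to 0$, so no uniform $d>0$ can satisfy \eqref{eq:lemma_compact_3} as literally stated. Correspondingly, the paper's set $\Lambda$ is \emph{not} compact in this situation (it is the relatively open complement of the closed set $\Gamma$ in $\partial\hat\Sc\times\IbN$), and the claimed positive minimum $d_0$ over $\Lambda$ need not be attained. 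Your proposed remedy---reclassifying such ``barely inactive'' indices with the active ones, since they inherit the Lie-derivative lower bound from step \eqref{eq:lemma_compact_2}---is exactly the right fix and leaves the downstream estimate \eqref{eq:temp3} in Theorem~\ref{theo:compact} intact. A clean formalization is to replace $\Ac(x),\Pc(x)$ in \eqref{eq:lemma_compact_2}--\eqref{eq:lemma_compact_3} by the relaxed split $\Ac_d(x)=\{i:\delta_i(x)<d\}$ and $\Pc_d(x)=\IbN\setminus\Ac_d(x)$, after which all three bounds are simultaneously attainable on a single $T_\varepsilon$.
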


\begin{proof}
\noindent\textbf{1) Proof of \eqref{eq:lemma_compact_1}.}
Notice that $T_\varepsilon \subseteq \hat \Sc$ is compact as $\hat \Sc$ is compact.
As $h_i \in C^1$ and $F$ is continuous, the map $x\mapsto L_F h_i(x)$ is continuous for each $i\in\mathbb I_N$, and therefore bounded on the compact set $T_\varepsilon$. Then \eqref{eq:lemma_compact_1} follows with
\begin{equation}
M \triangleq \max_{i\in\mathbb I_N}\; \max_{x\in T_\varepsilon} |L_F h_i(x)|.
\end{equation}

\smallskip

\noindent\textbf{2) Proof of \eqref{eq:lemma_compact_2}.}
Define a set
\begin{equation}
\Gamma \triangleq \{\,(x,i) ~|~ x\in \partial\hat \Sc,\ i \in \mathcal A(x)\}.
\end{equation}
The strict safety condition and MFCQ yields the strict positiveness of the Lie derivative \eqref{eq:safetyassumption}, i.e., $L_F h_i(x) > 0, \forall (x,i) \in \Gamma$.
Since $\partial\hat \Sc$ is compact and $\mathcal A(x)\subseteq \mathbb I_N$ is finite for every $x \in \partial\hat \Sc$, $\Gamma$ is compact.  
Because each $L_F h_i$ is continuous, the map $(x,i)\mapsto L_F h_i(x)$ is continuous on $\Gamma$ and therefore attains a strictly positive minimum:
\begin{equation}
r_0 \triangleq \min_{(x,i)\in\Gamma} L_F h_i(x) > 0,
\end{equation}
By the continuity of the map $L_F h_i$, we may choose an $\varepsilon>0$ sufficiently small such that 
\begin{equation}
L_F h_i(x) \ge r_0/2 > 0, \quad i\in \mathcal A(x), ~x \in T_\varepsilon.
\end{equation}
Then \eqref{eq:lemma_compact_2} follows with $r = r_0/2$.

\smallskip

\noindent\textbf{3) Proof of \eqref{eq:lemma_compact_3}.}
Define a set
\begin{equation}
\Lambda \triangleq \{\,(x,j) ~|~  x\in \partial\hat \Sc, ~j\in \mathcal P(x)\,\}.
\end{equation}
Similar to $\Gamma$ as in the previous case, $\Lambda$ is a compact set.  
The map $(x,j)\mapsto \delta_j(x)$ is continuous and strictly positive on $\Lambda$ (by definition), so it attains a minimum:
\begin{equation}
d_0 \triangleq \min_{(x,j)\in\Lambda} \delta_j(x) > 0.
\end{equation}
By the continuity of the map $\delta_j$, we may choose an $\varepsilon>0$ sufficiently small such that
\begin{equation}
\delta_j(x) \ge d_0/2 > 0, \quad j \in \Pc(x), ~x\in T_\varepsilon.
\end{equation}
Then \eqref{eq:lemma_compact_3} follows with $d = d_0/2$.
\end{proof}

Using the bounds from Lemma~\ref{lem:compact}, we can now specify a range of $\theta$ such that $L_F \tilde h_\theta(x)>0$ holds for all $x\in T_\varepsilon$,
which implies our goal of \eqref{eq:maingoal}.

\begin{theorem}[$\tilde h_\theta$ is a CBF for compact sets]
\label{theo:compact}
Let the assumptions of Lemma~\ref{lem:compact} hold with $\varepsilon,M,r,d>0$. Then, $\tilde h_\theta$ is a CBF for \eqref{eq:system} for all $\theta > \theta^*$ where
    \begin{equation}
    \label{eq:thetastar_compact}
        \theta^* = \max \left\{ \frac{\log N}{\varepsilon},  \frac{1}{d} \log{ \left( \frac{N(r+M)}{r} \right) }   \right\},
    \end{equation}
    where $N$ is the number of subconstraint functions $h_i$.
\end{theorem}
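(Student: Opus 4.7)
The plan is to establish the boundary condition \eqref{eq:maingoal}, since $\tilde{\mathcal S}_\theta \subseteq \hat{\mathcal S}$ (which follows from the elementary bound $\tilde h_\theta \le \hat h$) is closed and therefore compact as a subset of the compact $\hat{\mathcal S}$, while \eqref{eq:maingoal} simultaneously supplies the regular-value property for $\tilde h_\theta$ (since $L_F \tilde h_\theta > 0$ forces $\nabla \tilde h_\theta \neq 0$ on $\partial \tilde{\mathcal S}_\theta$) and strict safety of $\tilde{\mathcal S}_\theta$ under the same controller $k$ from Lemma~\ref{lem:compact}; this will let me invoke Theorem~\ref{theo:converse_compact} to conclude that $\tilde h_\theta$ is a CBF. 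The first step is to use the soft-min lower bound \eqref{eq:softmin_bounds}: any $x \in \partial \tilde{\mathcal S}_\theta$ satisfies $\tilde h_\theta(x) = 0$, hence $0 \le \hat h(x) \le (\log N)/\theta$, so choosing $\theta \ge (\log N)/\varepsilon$ guarantees $\partial \tilde{\mathcal S}_\theta \subseteq T_\varepsilon$. This accounts for the first term in \eqref{eq:thetastar_compact} and lets me apply the bounds \eqref{eq:lemma_compact_1}--\eqref{eq:lemma_compact_3} pointwise on $\partial \tilde{\mathcal S}_\theta$.

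The core of the argument is to control the decomposition \eqref{eq:decomposition} via a weight estimate. For any inactive index $j \in \Pc(x)$, I would bound the denominator of $w_{\theta,j}(x)$ from below by a single active exponential $\mathrm e^{-\theta \hat h(x)}$ (such a term always exists because $\Ac(x)$ is nonempty on $\hat{\mathcal S}$), which together with \eqref{eq:lemma_compact_3} yields $w_{\theta,j}(x) \le \mathrm e^{-\theta \delta_j(x)} \le \mathrm e^{-\theta d}$. Since the $N$ weights sum to one, the total inactive weight is at most $N \mathrm e^{-\theta d}$, and the total active weight is at least $1 - N \mathrm e^{-\theta d}$. Combining this with \eqref{eq:lemma_compact_1} and \eqref{eq:lemma_compact_2} gives
\begin{equation*}
L_F \tilde h_\theta(x) \;\ge\; r\bigl(1 - N \mathrm e^{-\theta d}\bigr) - M\, N \mathrm e^{-\theta d} \;=\; r - (r+M)\, N \mathrm e^{-\theta d},
\end{equation*}
uniformly on $T_\varepsilon$, and in particular on $\partial \tilde{\mathcal S}_\theta$.

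Forcing this lower bound to be strictly positive yields exactly the second threshold $\theta > \tfrac{1}{d}\log\bigl(N(r+M)/r\bigr)$ in \eqref{eq:thetastar_compact}. Taking the maximum of the two thresholds then delivers \eqref{eq:maingoal} for all $\theta > \theta^*$, which by the opening observation certifies $\tilde h_\theta$ as a CBF via Theorem~\ref{theo:converse_compact}. The only mildly delicate point I expect is being explicit that the inactive-weight bound is valid only because an active index is always present to anchor the denominator; everything else is an essentially mechanical combination of Lemma~\ref{lem:compact}, the soft-min inequality, and the fact that the weights form a convex combination.
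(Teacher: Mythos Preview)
Your proposal is correct and follows essentially the same route as the paper's proof: you use the soft-min lower bound \eqref{eq:softmin_bounds} to place $\partial\tilde{\mathcal S}_\theta$ in $T_\varepsilon$ for $\theta\ge(\log N)/\varepsilon$, bound each inactive weight by $\mathrm e^{-\theta d}$ via the single active term in the denominator, combine with \eqref{eq:lemma_compact_1}--\eqref{eq:lemma_compact_2} to obtain $L_F\tilde h_\theta(x)\ge r-(r+M)N\mathrm e^{-\theta d}$, and then read off the second threshold in \eqref{eq:thetastar_compact} before invoking Theorem~\ref{theo:converse_compact}. Your explicit remark that $\tilde{\mathcal S}_\theta\subseteq\hat{\mathcal S}$ inherits compactness, and that an active index is always available to anchor the denominator, are the only minor additions over the paper's presentation.
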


\begin{proof}
Recall the decomposition of the Lie derivative $L_F \tilde h_\theta$ between active and inactive constraints as in \eqref{eq:decomposition}.
Using the bounds $M,r,d>0$ from Lemma~\ref{lem:compact}, one can obtain 
\begin{equation}
\label{eq:temp1}
    L_F \tilde h_\theta(x) \geq r \sum_{i \in \Ac(x)} w_{\theta,i}(x) - M \sum_{j \in \Pc(x)} w_{\theta,j}(x),~~ \forall x \in T_\varepsilon.
\end{equation}
The gap bound $\delta_j(x) \geq d$ implies (since $\theta > 0$)
\begin{equation}
\label{eq:temp70}
    h_j(x) \geq \hat h(x) + d \implies {\rm e}^{-\theta h_j(x)} \leq {\rm e}^{-\theta (\hat h(x)+d)}.
\end{equation}
Using \eqref{eq:temp70} and \eqref{eq:grad_htilde} one gets
\begin{equation}
\label{eq:temp2}
    \sum_{j \in \Pc(x)} w_{\theta,j}(x) \leq N {\rm e}^{-\theta d}.
\end{equation}
Since $\sum_{i\in\mathcal A(x)}w_{\theta,i}(x) = 1 - \sum_{j\in\mathcal P(x)}w_{\theta,j}(x)$,  \eqref{eq:temp1} and \eqref{eq:temp2} yield 
\begin{equation}
\label{eq:temp3}
    L_F \tilde h_\theta(x) \ge r - (r+M)N e^{-\theta d}, ~~ \forall x \in T_\varepsilon.
\end{equation}
To ensure strict positiveness, we require
\begin{equation}
\label{eq:temp4}
    r - (r+M)N {\rm e}^{-\theta d} > 0 \iff {\rm e}^{-\theta d} < \frac{r}{N(r+M)},
\end{equation}
which holds whenever
\begin{equation}
\label{eq:temp5}
    \theta > \theta^*_{\rm core} \triangleq \frac{1}{d} \log{ \left( \frac{N(r+M)}{r} \right) }.
\end{equation}
Notice that $\theta^*_{\rm core}$ is the second term in \eqref{eq:thetastar_compact}.

The soft-min lower bound \eqref{eq:softmin_bounds} implies that choosing $\theta \geq \tfrac{\log{N}}{\varepsilon}$ ensures $\partial \tilde \Sc_\theta \subset T_\varepsilon$.
Notice $\tfrac{\log{N}}{\varepsilon}$ is the first term in \eqref{eq:thetastar_compact}. Thus, choosing $\theta > \theta^*$ with $ \theta^*$ as in \eqref{eq:thetastar_compact} satisfies \eqref{eq:maingoal}. 

Finally, the strict positiveness of the Lie derivative implies $\nabla \tilde h_\theta(x)\neq 0$ on $\partial\tilde \Sc_\theta$, and thus $0$ is a regular value of $\tilde h_\theta$. Then $\tilde \Sc_\theta$ is {strictly} safe based on Aubin's strict invariance condition \cite{aubin1991viability}; and by Theorem~~\ref{theo:converse_compact}, $\tilde h_\theta$ is a CBF $\forall \theta > \theta^*$.    
\end{proof}

{\color{black}
While Theorem~\ref{theo:converse_compact} certifies $\tilde{h}_\theta$ as a CBF and thereby guarantees the existence of a class-$\mathcal{K}$ function $\alpha$ for which the safety constraint in~\eqref{eq:safetyfilter} is feasible, it does not by itself prescribe such an $\alpha$. 
By leveraging the converse safety theorem, however, one can derive a condition that yields a lower bound for such a class-$\mathcal{K}$ function. In particular, a linear class-$\mathcal{K}$ function $\alpha(r) = \gamma r$ leads to a feasible safety constraint whenever $\gamma$ satisfies
\begin{equation}
\gamma \geq \gamma^* \triangleq \frac{L_{\rm max}}{\varepsilon}, ~ L_{\rm max} \triangleq \displaystyle\sup_{x \in \tilde S_\theta}\Big[\max\bigl(0,-L_F \tilde h_\theta(x)\bigr) \Big]
\label{eq:gamma_star}
\end{equation}
with $\varepsilon$ as in Lemma~\ref{lem:compact}, and $L_{\rm max}$ representing the worst rate of change of $\dot{\tilde h}_\theta$ on $\tilde \Sc_\theta$.
Inside the tube $T_\varepsilon$, the proof of Theorem~\ref{theo:compact} already ensures $L_F \tilde h_\theta(x) > 0$, so the CBF condition holds for any $\gamma > 0$. Outside the tube, the bound~\eqref{eq:gamma_star} ensures that $\gamma\,\tilde h_\theta(x) \geq -L_F \tilde h_\theta(x)$. 
}

\begin{remark}[Computability of $\theta^*$ and $\gamma^*$]
\label{rem:computability}
{\color{black}
All quantities entering the threshold $\theta^*$ in~\eqref{eq:thetastar_compact} and the bound~\eqref{eq:gamma_star} on $\gamma$ can be determined offline from the system data: the constants $\varepsilon$, $M$, $r$, and $d$ in $\theta^*$ follow from Lemma~\ref{lem:compact}, and $L_{\rm max}$ can be evaluated numerically by sampling once $\theta$ is fixed. Together, these provide a recipe for a priori feasibility of the safety constraint in~\eqref{eq:safetyfilter} uniformly over $\tilde \Sc_\theta$. 
}
\end{remark}

\subsection{General Case}
\label{subsec:unbounded}

In this part we study a set of sufficient conditions implying similar results without relying on the compactness property.
Since $T_\varepsilon$ may be unbounded, the uniform bounds \eqref{eq:lemma_compact_1}-\eqref{eq:lemma_compact_3} need not hold and \eqref{eq:safetyassumption} alone is not sufficient for \eqref{eq:maingoal}.
We therefore decompose 
$T_\varepsilon = (T_\varepsilon \cap B_R) \cup (T_\varepsilon \cap B_R^{\rm c})$ with $B_R \triangleq \{x ~|~ \|x\| \le R\}$ and $B_R^{\rm c}$ denotes the complement of $B_R$. The compact \emph{core} (the first term in the set union) inherits the bounds of Section~\ref{sec:compact}, and we impose additional conditions on the unbounded \emph{tail end} (the second term) to control inactive terms in $L_F \tilde h_\theta$.



\begin{assumption}
\label{ass:unbounded}
    There exist constants $\varepsilon,R>0$ such that following statements are true for every $x \in T_\varepsilon \cap B_R^{\rm c}$:
    \begin{enumerate}[label=(\alph*)]
        \item There exists a nondecreasing function $\eta : [R,\infty)\to \R_{>0}$ with $\eta(r)\to\infty$ as $r\to\infty$ that satisfies
        \begin{equation}
              \min_{ j \in \Pc(x)} \delta_j(x) \geq \eta(\|x\|).
        \end{equation}
        \item There exist constants $C,p\ge0$ satisfying 
        \begin{equation}
              \max_{j \in \Pc(x)} |L_Fh_j(x)| \le C ( 1 + \|x\| )^p.
        \end{equation}      
        \item There exists a constant $r_\infty>0$ satisfying  
        \begin{equation}
              \min_{i \in \Ac(x)} L_Fh_i(x) \geq r_\infty.
        \end{equation}   
    \end{enumerate}
\end{assumption}


Assumption~\ref{ass:unbounded} ensures that, on the tail end, the active part of $L_F \tilde h_\theta$ (the first term in \eqref{eq:decomposition}) dominates the inactive part (the second term): (a) forces $w_{\theta,j}\to 0$ exponentially for inactive constraints, which overcomes the polynomial growth allowed by (b).
As a result, the inactive part in \eqref{eq:decomposition} goes to 0 as $\|x\|\to \infty$ or $\theta \to \infty$. At any fixed $x$, we can find a finite $\theta^*$ such that the inactive term in \eqref{eq:decomposition} falls within the strictly positive margins of the uniform bound of the active part (that is, $r_\infty$ as assumed by Assumption~\ref{ass:unbounded}\,(c)).


\begin{remark}
Assumption~\ref{ass:unbounded}\,(a) holds, for example, when there is a unique minimizer $i^*$ of $\hat h$ at the tail end and the inactive gaps are radially coercive ($\delta_j(x)\to\infty$ as $\|x\|\to \infty$); then $\eta(r) \triangleq \min_{j \neq i^*} \min_{\|x\|=r} \left( h_j(x) - h_{i^*}(x) \right)$ satisfies the requirement.
\end{remark}

\begin{theorem}[$\tilde h_\theta$ is an eCBF for general case]
\label{theo:unbounded}
Let $\hat \Sc$ be defined as in \eqref{eq:Shat} with $h_i \in C^1, \forall i \in \IbN$ with $N \ge 1$, {assume MFCQ holds for $\partial \hat \Sc$}, and suppose that Assumption~\ref{ass:unbounded} holds.
If the system \eqref{eq:system_closed} is {strictly} safe w.r.t. $\hat \Sc$, then there exists a $\theta^*>0$ such that $\tilde h_\theta$ is an eCBF for \eqref{eq:system} for all $\theta > \theta^*$.
\end{theorem}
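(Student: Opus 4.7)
The plan is to mirror the architecture of Theorem~\ref{theo:compact}: localize the analysis to the tube $T_\varepsilon$ around $\partial \hat \Sc$ using the soft-min lower bound \eqref{eq:softmin_bounds}, establish a uniform lower bound $L_F \tilde h_\theta > 0$ on $T_\varepsilon$, and then invoke Aubin's strict invariance result together with Theorem~\ref{theo:converse} to certify $\tilde h_\theta$ as an eCBF. The new element relative to the compact case is that $\hat \Sc$ may be unbounded, so I would split the tube as $T_\varepsilon = (T_\varepsilon \cap B_R) \cup (T_\varepsilon \cap B_R^{\rm c})$ with the radius $R$ from Assumption~\ref{ass:unbounded} and treat the compact \emph{core} and the unbounded \emph{tail} separately.

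On the core $T_\varepsilon \cap B_R$, I would reuse the proof of Lemma~\ref{lem:compact} essentially verbatim (possibly after shrinking $\varepsilon$): compactness together with continuity of $L_F h_i$, the strict safety condition \eqref{eq:safetyassumption}, and MFCQ deliver constants $M_{\rm c},r_{\rm c},d_{\rm c}>0$ such that $|L_F h_i| \le M_{\rm c}$, $L_F h_i \ge r_{\rm c}$ on active indices, and $\delta_j \ge d_{\rm c}$ on inactive indices for every $x$ in the core. Plugging these into the chain of estimates \eqref{eq:temp1}--\eqref{eq:temp5} of Theorem~\ref{theo:compact} yields a threshold $\theta^*_{\rm c}$ such that $L_F \tilde h_\theta(x) > 0$ on the core whenever $\theta > \theta^*_{\rm c}$.

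On the tail $T_\varepsilon \cap B_R^{\rm c}$, I would feed Assumption~\ref{ass:unbounded} directly into the decomposition \eqref{eq:decomposition}. Part (a) gives $\sum_{j \in \Pc(x)} w_{\theta,j}(x) \le N\,e^{-\theta\,\eta(\|x\|)}$ via the same computation as \eqref{eq:temp70}--\eqref{eq:temp2} but with the state-dependent gap $\eta(\|x\|)$ replacing $d$, while parts (b) and (c) control the Lie-derivative magnitudes and produce
\begin{equation}
L_F \tilde h_\theta(x) \;\ge\; r_\infty \;-\; N\bigl(r_\infty + C(1+\|x\|)^p\bigr)\, e^{-\theta\,\eta(\|x\|)}.
\end{equation}
The main obstacle is making this right-hand side strictly positive \emph{uniformly} in $\|x\| \in [R,\infty)$; this reduces to bounding the scalar function $g(r) \triangleq \eta(r)^{-1}\log\bigl(N(r_\infty + C(1+r)^p)/r_\infty\bigr)$ on $[R,\infty)$. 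Since its numerator grows only logarithmically in $r$ while $\eta(r) \to \infty$, the ``exponential-dominates-polynomial'' reasoning flagged in the remark preceding the theorem yields $\theta^*_{\rm t} \triangleq \sup_{r \ge R} g(r) < \infty$, and any $\theta > \theta^*_{\rm t}$ makes the tail bound strictly positive.

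Setting $\theta^* = \max\{\log N/\varepsilon,\, \theta^*_{\rm c},\, \theta^*_{\rm t}\}$, for every $\theta > \theta^*$ the boundary $\partial \tilde \Sc_\theta$ lies in $T_\varepsilon$ by \eqref{eq:softmin_bounds} and $L_F \tilde h_\theta > 0$ throughout $T_\varepsilon$. This forces $\nabla \tilde h_\theta \neq 0$ on $\partial \tilde \Sc_\theta$, so $0$ is a regular value of $\tilde h_\theta$ and Aubin's strict invariance condition certifies strict safety of $\tilde \Sc_\theta$ under $F$; Theorem~\ref{theo:converse} then concludes that $\tilde h_\theta$ is an eCBF for \eqref{eq:system}.
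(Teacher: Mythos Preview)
Your overall architecture mirrors the paper's proof exactly: split $T_\varepsilon$ into the compact core $T_\varepsilon\cap B_R$ and the unbounded tail $T_\varepsilon\cap B_R^{\rm c}$, recycle Lemma~\ref{lem:compact} and the chain \eqref{eq:temp1}--\eqref{eq:temp5} on the core, feed Assumption~\ref{ass:unbounded} into the decomposition \eqref{eq:decomposition} on the tail, combine the three thresholds, and close with Aubin's strict invariance plus Theorem~\ref{theo:converse}. The core and the concluding steps are essentially identical to the paper's.

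The tail argument, however, has a gap. You retain the state-dependent lower bound and then assert $\theta^*_{\rm t}=\sup_{r\ge R}g(r)<\infty$ with $g(r)=\eta(r)^{-1}\log\bigl(N(r_\infty+C(1+r)^p)/r_\infty\bigr)$, appealing to ``exponential dominates polynomial.'' But Assumption~\ref{ass:unbounded}(a) only guarantees $\eta(r)\to\infty$ with no rate, so for $p>0$ the ratio $p\log(1+r)/\eta(r)$ need not stay bounded; e.g.\ $\eta(r)=\sqrt{\log(1+r)}$ satisfies the assumption yet gives $g(r)\to\infty$. The exponential-versus-polynomial heuristic implicitly requires $\eta$ to grow at least logarithmically, which is not part of the hypothesis. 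The paper handles the tail differently: it immediately replaces $\eta(\|x\|)$ by the uniform lower bound $\eta(R)$ and writes the state-independent estimate \eqref{eq:tailbound}, producing the closed-form $\theta^*_{\rm tail}$ in \eqref{eq:thetail}. Be aware, though, that in passing to \eqref{eq:tailbound} the paper also replaces $C(1+\|x\|)^p$ by $C(1+R)^p$, which goes the wrong way on $\|x\|\ge R$; your $\|x\|$-dependent inequality is in fact the correct starting point, and what is missing---in both arguments as written---is either a logarithmic growth hypothesis on $\eta$ or an additional estimate that controls $(1+\|x\|)^p e^{-\theta\eta(\|x\|)}$ uniformly on the tail.
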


\begin{proof}

We analyze the core and tail regions separately.

\smallskip
\noindent\textbf{Core region $(T_\varepsilon \cap B_R)$.}
The proof of Theorem~\ref{theo:compact} yields a critical value $\theta^*_{\rm core}$ as in \eqref{eq:temp5} with terms $r,M,d>0$ whose existence is guaranteed per Lemma~\ref{lem:compact} as $T_\varepsilon \cap B_R$ is compact for any given $R>0$.

\smallskip
\noindent\textbf{Tail region $(T_\varepsilon \cap B_R^{\rm c})$.}
Recall the decomposition \eqref{eq:decomposition}.
Assumption~\ref{ass:unbounded}\,(b)-(c) give, for all $x\in T_\varepsilon \cap B_R^{\rm c}$,
\begin{equation}
    L_F \tilde h_\theta(x) \geq r_\infty \!\!\sum_{i \in \Ac(x)} w_{\theta,i}(x) - C(1+\|x\|)^p \!\!\!\sum_{j \in \Pc(x)} w_{\theta,j}(x).
\end{equation}

The radially coercive gap $\min_{j\in\mathcal P(x)}\delta_j(x) \ge \eta(\|x\|)$ by Assumption~\ref{ass:unbounded}\,(a) can be used in a similar way to \eqref{eq:temp70}-\eqref{eq:temp2}, which yields
\begin{equation}
\sum_{j\in\mathcal P(x)} w_{\theta,j}(x)
\le (N-1)e^{-\theta\,\eta(R)},
\quad x\in T_\varepsilon \cap B_R^{\rm c}.
\end{equation}
Using $\sum_{i\in\mathcal A(x)}w_{\theta,i}(x) = 1 - \sum_{j\in\mathcal P(x)}w_{\theta,j}(x)$, we obtain for all $x\in T_\varepsilon \cap B_R^{\rm c}$,
\begin{equation}
L_F \tilde h_\theta(x)
\ge r_\infty 
- (N-1)\big(r_\infty + C(1+R)^p\big)e^{-\theta\,\eta(R)}.
\label{eq:tailbound}
\end{equation}
\Added{The strict condition below ensures strict positivity in the tail:}
\begin{equation}
r_\infty 
> (N-1)\big(r_\infty + C(1+R)^p\big)e^{-\theta\,\eta(R)}
\ \Longleftrightarrow\
\theta > \theta^*_{\mathrm{tail}},
\end{equation}
where $\theta^*_{\mathrm{tail}}
\triangleq \frac{1}{\eta(R)}
\log\!\Bigg(
\frac{(N-1)\big(r_\infty + C(1+R)^p\big)}{r_\infty}
\Bigg)$.

Finally, per \eqref{eq:softmin_bounds}, $\theta \geq \tfrac{\log{N}}{\varepsilon} \implies \partial \tilde \Sc_\theta \subset T_\varepsilon$.
Therefore, choosing $\theta > \theta^*$ satisfies \eqref{eq:maingoal}, where
\begin{equation}
\theta^* 
= \max\!\left\{
\frac{\log N}{\varepsilon}, \theta^*_{\mathrm{core}}, \theta^*_{\mathrm{tail}}
\right\}.
\end{equation}

The strict positiveness of the Lie derivative implies $\nabla \tilde h_\theta(x)\neq 0$ on $\partial\tilde \Sc_\theta$, and thus $0$ is a regular value of $\tilde h_\theta$. Then $\tilde \Sc_\theta$ is {strictly} safe based on Aubin's strict invariance condition \cite{aubin1991viability}; and by Theorem~\ref{theo:converse}, $\tilde h_\theta$ is an eCBF $\forall \theta > \theta^*$.

\end{proof}

\section{Connection to the BCBF Framework}
\label{sec:backup}

We now connect the general results of Section~\ref{sec:main} to the BCBF setting introduced in Section~\ref{subsec:backup}.
Let a (open-loop) system $f\in C^1$ as in \eqref{eq:system}, a set $\Sc$ and a backup set $\Sb$ defined with functions $h,\hb\in C^1$, respectively. 
Suppose that a backup controller $\kb\in C^1$ yields a closed-loop vector field $\Fb(x)=f(x,\kb(x))\in C^1$, where the flow is denoted as $\phi(x_0,\tau)$ with an initial condition $x(0)=x_0 \in \Sc$.
Recall from Section~\ref{subsec:backup}
\begin{equation}
\label{eq:BCBF_hatB}
    \hat \Bc = \{x \in \R^n ~|~ \hat b(x)\ge0\}, \quad \hat b(x) = \min_{i\in \IbN} b_i(x),
\end{equation}
where
\begin{equation}
b_i(x)= 
\begin{cases}
h(\phi(x,\tau_i)), &~{\rm if}~ i \in \IbN \setminus \{N\}, \\
\hb( \phi(x,T) ), &~{\rm if}~ i=N
\end{cases}
\end{equation}
for a finite horizon $T>0$ and a sampling interval $\Delta \tau > 0$ with $\tau_i \in \{0, \Delta \tau, \cdots , T\} $ for all $i \in \IbN$ with $N = T / \Delta \tau + 1$.
We use the soft-min inner-approximation with a $\theta>0$:
\begin{equation}
\label{eq:btilde}
    \tilde b_\theta(x) \triangleq -\frac{1}{\theta}\log\Big(\sum_{i \in \IbN} e^{-\theta b_i(x)}\Big).
\end{equation}
Our goal is to find conditions certifying $\tilde b_\theta$ as a CBF.

We start with defining the backward-expanded set of the (terminal) backup set $\Sb$:
\begin{equation}
    \Bc_N \triangleq \{ x \in \R^n ~|~ b_N(x) \triangleq \hb( \phi(x,T) ) \ge 0 \}.
\end{equation}
Note that $\Bc_N$ consists of points $x$ such that, starting from $x$ at $\tau = 0$, the flow $\phi(x,\tau)$ reaches in $\Sb$ at $\tau = T$. Moreover, $\Bc_N \supseteq \hat \Bc$.
What follows are useful properties of $\Bc_N$.
\begin{lemma}
\label{lem:semigroup}
Let $\Fb,\hb\in C^1$, let $\Sb$ be a compact set, let 0 be a regular value of $\hb$, \Added{assume that the flow $\phi(x,\tau)$ of $\Fb$ is uniquely defined for all $|\tau|\le T$ and all $x \in \R^n$}. 
If $\Fb$ is strictly safe w.r.t. $\Sb$, then it is also strictly safe w.r.t. $\Bc_N$. Moreover, $\Bc_N$ is compact.
\end{lemma}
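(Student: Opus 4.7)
The plan hinges on two observations: first, that the time-$T$ flow $\phi(\cdot,T)$ is a $C^1$ diffeomorphism of $\R^n$, so $\Bc_N$ is the continuous backward image of $\Sb$; second, that the semigroup identity $\phi(\phi(x,t),T)=\phi(x,t+T)$ transports strict safety from $\Sb$ back to $\Bc_N$. I would begin with compactness. Because $\Fb\in C^1$ and $\phi(x,\tau)$ is uniquely defined for all $|\tau|\le T$ and $x\in\R^n$, the time-$T$ map $\phi(\cdot,T):\R^n\to\R^n$ is a $C^1$ diffeomorphism with inverse $\phi(\cdot,-T)$ (the backward flow). Rewriting $\Bc_N=\{x\in\R^n~|~\phi(x,T)\in\Sb\}=\phi(\cdot,-T)(\Sb)$ then exhibits $\Bc_N$ as the image of the compact set $\Sb$ under a continuous map, and therefore $\Bc_N$ is compact.

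For strict safety, I would fix $x_0\in\Bc_N$, set $x(t)=\phi(x_0,t)$ for $t>0$, and use the semigroup identity to obtain
\begin{equation*}
b_N(x(t))=\hb(\phi(x(t),T))=\hb(\phi(x_0,t+T))=\hb(\phi(\phi(x_0,T),t)).
\end{equation*}
Since $x_0\in\Bc_N$ gives $\phi(x_0,T)\in\Sb$, the strict safety of $\Fb$ w.r.t.\ $\Sb$ places $\phi(\phi(x_0,T),t)\in{\rm Int}(\Sb)$ for every $t>0$; combined with $0$ being a regular value of $\hb$---so that ${\rm Int}(\Sb)=\{y~|~\hb(y)>0\}$---this yields $b_N(x(t))>0$. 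Continuity of $b_N$ (inherited from $\phi$ and $\hb$) makes $\{y~|~b_N(y)>0\}$ an open subset of $\Bc_N$, hence contained in ${\rm Int}(\Bc_N)$, so $x(t)\in{\rm Int}(\Bc_N)$ for all $t>0$, which is exactly the strict safety of $\Fb$ w.r.t.\ $\Bc_N$.

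I do not expect a serious obstacle: the proof is essentially bookkeeping with the semigroup identity. The only subtle points are (i) verifying that the time-$T$ map is a $C^1$ diffeomorphism, which follows from the two-sided uniqueness hypothesis for $|\tau|\le T$, and (ii) inferring from $b_N(x(t))>0$ that $x(t)$ lies in ${\rm Int}(\Bc_N)$, which requires only continuity of $b_N$ and the openness of strict super-level sets. The regular-value hypothesis on $\hb$ enters solely in identifying ${\rm Int}(\Sb)$ with $\{\hb>0\}$ and plays no other role.
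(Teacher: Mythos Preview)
Your proposal is correct and follows essentially the same route as the paper: establish compactness by writing $\Bc_N=\phi(\cdot,-T)(\Sb)$ as the continuous image of a compact set under the inverse of the time-$T$ diffeomorphism, and obtain strict safety via the semigroup identity $\phi(\phi(x_0,T),t)=\phi(x_0,t+T)$ to transport the strict-interior property from $\Sb$ to $\Bc_N$. If anything, you are slightly more careful than the paper in explicitly invoking the regular-value hypothesis to identify ${\rm Int}(\Sb)=\{\hb>0\}$ and in justifying why $b_N(x(t))>0$ implies $x(t)\in{\rm Int}(\Bc_N)$.
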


\begin{proof}
(i)~{\em Compactness.}~
\Added{Define the map $\Phi_\tau(x)=\phi(x,\tau)$. As the solution flow uniquely exists on $\tau \in [-T,T]$ for all $x\in\R^n$, 
$\Phi_{\tau}$ is injective, because no two different initial conditions can lead to a same point in its image (if $\Phi_{\tau}(x_1)=\Phi_{\tau}(x_2)$, then $x_1=x_2$). 
Thus, $\Phi_{\tau}: \R^n \mapsto \Phi_{\tau}(\R^n)$ is bijective for all $|\tau|\le T$, where $\Phi_{\tau}(\R^n)$ is its image.}
Furthermore, since $\Fb \in C^1$, $\Phi_{\tau}$ is $C^1$ for all $|\tau|\le T$. 
Thus, $\Phi_{\tau}$ is a $C^1$ diffeomorphism with a $C^1$ inverse $\Phi_{-\tau}$ for all $|\tau|\le T$ \cite{lee2003smooth}.
This implies that $\Bc_N$ is the continuous image of the compact set $\Sb$ under the inverse: $\Bc_N = \Phi_{-T}(\Sb)$, and therefore $\Bc_N$ is compact.

\smallskip
(ii)~{\em Safety.}~We use the semigroup property of the flow similar to \cite{gurriet2020scalable}---but for strict safety. 
Take any $x \in \Bc_N$, then $\Phi_{T}(x) \in \Sb$ by definition of $\Bc_N$. Since the backup system is strictly safe w.r.t. $\Sb$, for every $s > 0$ we have $\Phi_{T+s}(x)\in {\rm Int}(\Sb)$. Thus, for every $s>0$, $b_N( \Phi_{s}(x) ) = \hb( \Phi_{T+s}(x) ) >0$, so $\Phi_{s}(x) \in {\rm Int}(\Bc_N)$. This is exactly the safety definition in Definition~\ref{def:safety}.
\end{proof}

Safety of $\Bc_N$ creates a trivial case. In particular, if $\Bc_N \subset {\rm Int}(\Sc)$, then backup trajectories $\phi$ cannot visit $\partial \Sc$ as they have to stay inside $\Bc_N$, and this implies $\hat \Bc = \Bc_N$. In this case, Lemma~\ref{lem:semigroup} provides us with all that is required to invoke Theorem~\ref{theo:compact}; so $\hat b(x) = b_N(x)$ can be certified as a CBF (without using $h$).

In the more general case of $\hat \Bc \subset \Bc_N$, we need to pay close attention to the boundary points of $\hat \Bc$ where discretized `slice'\footnote{\Added{By `slice' we mean evaluating $h$ at a fixed time slice of the backup trajectory: the constraint $b_i(x) = h(\phi(x,\tau_i))$ records the value of $h$ after flowing the system from $x$ for $\tau_i$ seconds of time.}}
constraints of $b_i(x) = h( \phi(x,\tau_i))$ are active for some $i < N$.
Observe that $\phi(x,\tau_i) \in \partial \Sc $ at some $x \in \partial \hat \Bc$ if a slice becomes active $b_i(x) = \hat b(x)$ for some $i < N$.
Furthermore, since $\Bc_N$ is {strictly} safe under the backup flow, all such evaluations necessarily lie in $\Bc_N$ as well.
Consequently, every potential activation of some $i < N$ on the boundary of $\hat \Bc$ occurs on the \emph{backup-reachable boundary of $\Sc$}:
\begin{equation}
    \label{eq:Scrit}
    \Cc \triangleq \partial \Sc \cap \Bc_N.
\end{equation}
\begin{assumption}
\label{ass:BCBF}
    The following holds:
    \begin{equation}
    \label{eq:BCBFassumption_onC}
        L_{\Fb} h(y) > 0, ~~\forall y \in \Cc.
    \end{equation}
\end{assumption}

Assumption~\ref{ass:BCBF} translates as the strict safety wherever $\partial \Sc$ becomes relevant for the backup trajectory. 
Together with the strict safety of $\Sb$, this yields the strict safety of $\hat \Bc$, which is required to invoke Theorem~\ref{theo:compact}.
Note that the condition is required only on $\Cc$ because this set consists of the boundary points of the target safe set $\Sc$ and the backward-expanded set of $\Bc_N$, i.e., exactly the states at which the backup-based construction is applied. Any other boundary point of $\Sc$ that is outside $\Cc$ is not relevant for the formulation as $x \notin \Bc_N$ implies $\phi(x,T) \notin \Sb$.
{\color{black} We note that Assumption~\ref{ass:BCBF}, together with the other hypotheses of Corollary~1, can be verified directly from the system data, either analytically or numerically; the example in Section~V illustrates such a verification on a grid.}


\begin{corollary}
\label{cor:softmin_bcbf}
Let $\Fb,h,\hb \in C^1$, let $\Sb$ be compact, 0 is a regular value of $\hb$, and suppose the flow $\phi(x,\tau)$ of $\Fb$ exists for all $|\tau|\leq T$ for every initial condition $x \in \hat \Bc$. 
If $\Fb$ is {strictly} safe w.r.t. $\Sb$ and Assumption~\ref{ass:BCBF} holds,
then there exists a $\theta^*>0$ such that $\tilde b_\theta$ is a CBF for \eqref{eq:system} for all $\theta\ge\theta^*$.
\end{corollary}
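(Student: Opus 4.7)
The plan is to instantiate Theorem~\ref{theo:compact} with $\hat{\Sc}=\hat{\Bc}$, subconstraints $h_i = b_i$, and closed-loop field $F=\Fb$ (so $k=\kb$ in \eqref{eq:system_closed}). That theorem's hypotheses are compactness of the set, $C^1$ subconstraints, MFCQ on the boundary, and strict safety of the set under the closed-loop field. If all four are verified, Theorem~\ref{theo:compact} produces $\theta^*$ directly.

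First I would dispose of the routine pieces. Compactness of $\hat{\Bc}$ follows because $\hat{\Bc}$ is closed (continuity of the $b_i$) and contained in $\Bc_N$, which is compact by Lemma~\ref{lem:semigroup}. Smoothness $b_i \in C^1$ follows from $\Fb \in C^1$ via standard ODE sensitivity: the flow map $\phi(\cdot,\tau_i)$ is $C^1$ for $|\tau_i|\le T$, and $h,\hb \in C^1$.

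The heart of the proof is the Lie derivative positivity $L_{\Fb} b_i(x) > 0$ for all $x \in \partial \hat{\Bc}$ and $i \in \Ac(x)$. This simultaneously implies MFCQ (with witness $v = \Fb(x)$) and, via the Aubin strict invariance equivalence cited after \eqref{eq:safetyassumption}, strict safety of $\hat{\Bc}$ under $\Fb$. Combining the chain rule with the semigroup identity $\frac{\partial \phi(x,\tau)}{\partial x} \Fb(x) = \Fb(\phi(x,\tau))$ reduces the Lie derivative along $\Fb$ to a Lie derivative at the propagated point:
\begin{equation*}
L_{\Fb} b_i(x) = L_{\Fb} h(\phi(x,\tau_i)) \text{ for } i<N, \qquad L_{\Fb} b_N(x) = L_{\Fb} \hb(\phi(x,T)).
\end{equation*}
Fix $x \in \partial \hat{\Bc}$ and $i \in \Ac(x)$, so $b_i(x)=0$. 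If $i=N$, then $\phi(x,T) \in \partial \Sb$, and strict safety of $\Sb$ together with $0$ being a regular value of $\hb$ yields the positivity at that point via Aubin's condition. If $i<N$, then $\phi(x,\tau_i) \in \partial \Sc$, and I would invoke Assumption~\ref{ass:BCBF} provided I can show $\phi(x,\tau_i) \in \Bc_N$. This is the main subtle step: since $x \in \Bc_N$ we have $\phi(x,T) \in \Sb$; by strict safety of $\Sb$ under $\Fb$ the flow stays in $\Sb$ for all forward times, so $\phi(x,T+\tau_i) \in \Sb$; and the semigroup property rewrites this as $\phi(\phi(x,\tau_i),T) \in \Sb$, which is exactly $\phi(x,\tau_i) \in \Bc_N$. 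Hence $\phi(x,\tau_i) \in \Cc$, and Assumption~\ref{ass:BCBF} supplies the required positivity.

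With all four hypotheses of Theorem~\ref{theo:compact} verified, that theorem produces a finite $\theta^*>0$ such that $\tilde b_\theta$ is a CBF for \eqref{eq:system} for all $\theta > \theta^*$, which is the claim. The main obstacle is the $i<N$ case: one needs both the semigroup identity to rewrite the Lie derivative of $b_i$ and the \emph{strict} safety of $\Sb$ (not merely safety) to push $\phi(x,T)$ forward by an extra $\tau_i$ and fold back through the semigroup to land $\phi(x,\tau_i)$ inside $\Bc_N$, thereby placing the activation on $\Cc$, the only locus where Assumption~\ref{ass:BCBF} is assumed.
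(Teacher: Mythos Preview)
Your proposal is correct and follows essentially the same route as the paper: both verify the four hypotheses of Theorem~\ref{theo:compact} (compactness, $C^1$ subconstraints, MFCQ, strict safety) and reduce the boundary Lie-derivative positivity to the two cases $i<N$ and $i=N$. The only minor difference is that, to show $\phi(x,\tau_i)\in\Bc_N$, the paper directly invokes the strict safety of $\Bc_N$ established in Lemma~\ref{lem:semigroup}, whereas you inline that argument via the semigroup identity and strict safety of $\Sb$; your explicit chain-rule reduction $L_{\Fb}b_i(x)=L_{\Fb}h(\phi(x,\tau_i))$ is also left implicit in the paper's proof.
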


\begin{proof}
We verify the four hypotheses of Theorem~\ref{theo:compact} (see Lemma~\ref{lem:compact}) in order.

\smallskip
\paragraph{$\hat \Bc$ is compact}
By Lemma~\ref{lem:semigroup}, $\Bc_N$ is compact and {strictly} safe under $\Fb$. 
For any $i<N$, we have $b_i(x)=h(\phi(x,\tau_i))$, which is continuous, hence $\Bc_i=\{x~|~b_i(x)\ge 0\}$ is closed.
Therefore $\hat \Bc=\bigcap_{i=1}^N B_i\subseteq B_N$ is the intersection of a compact set with finitely many closed sets, and is thus compact. 

\smallskip
\paragraph{$b_i\in C^1$ for all $i\in \IbN $}
Since $\Fb,h,h_b\in C^1$ and the flow exists for all $|\tau| \le T$, the flow $x\mapsto\phi(x,\tau)$ is $C^1$ for each fixed $\tau$.
Thus, subconstraint functions $b_i(x)=h(\phi(x,\tau_i))$ for a $i<N$ and $b_N(x)=\hb(\phi(x,T))$ are all $C^1$.
Indeed
\begin{equation}
\label{eq:corol1}
\nabla b_i(x)=
\begin{cases}
    D_x^\top\phi(x,\tau_i)\,\nabla h(\phi(x,\tau_i)), & i<N,\\
    D_x^\top\phi(x,T)\,\nabla \hb(\phi(x,T)), & i=N,
\end{cases}
\end{equation}
where $D^\top_x\phi \!\in\! \R^{ n \times n}$ is the \Added{transpose} of the Jacobian matrix of $\phi(x,\cdot)$ w.r.t. $x$.

\smallskip
\paragraph{MFCQ holds on $\partial \hat \Bc$}
For an arbitrary $x\in\partial \hat \Bc$, we consider two cases: \Added{$i \in \Ac(x)$ for a $i < N$, or $N \in \Ac(x)$.
The former implies} $\phi(x,\tau_i)\in\partial \Sc$. In addition, since $\hat \Bc\subseteq \Bc_N$, and $\Bc_N$ is strictly safe, then $\phi(x,\tau_i)\in \Bc_N$.
Then Assumption~\ref{ass:BCBF} applies, and we have that $L_{\Fb} b_i(x) > 0$.
Thus, taking $v=\Fb(x)$ satisfies \eqref{eq:MFCQ} for all $x\in \partial \hat \Bc$ \Added{when $i$-th constraint is active with $i<N$.
If $N \in \Ac(x)$ for a $x\in \partial \hat \Bc$}, then $\phi(x,T)\in\partial \Sb$, and strict safety of $\Sb$ yields $L_{\Fb}\hb(\phi(x,T))>0$ as 0 is a regular value of $\hb$.
Similarly, $v=F_b(x)$ satisfies \eqref{eq:MFCQ} for this case. Thus, MFCQ holds on $\partial \hat \Bc$ with $v=\Fb(x)$.

\paragraph{(The backup system is strictly safe w.r.t. $\hat \Bc$}
Under MFCQ, Aubin’s strict invariance condition \cite{aubin1991viability} reduces to the strict positiveness of the Lie derivative on boundary, cf. \eqref{eq:safetyassumption}. 
We verify this inequality at an arbitrary $x\in\partial\hat \Bc$ \Added{for the same two cases as in the previous part: $i \in \Ac(x)$ for a $i < N$, or $N \in \Ac(x)$}.
\emph{Case $i<N$,} as discussed in the previous part,
Assumption~\ref{ass:BCBF} applies, and $L_{\Fb} b_i(x) > 0$ for all  $x\in\partial \hat \Bc$ \Added{when $i$-th constraint is active with $i<N$}.
\emph{Case $i=N$,} means $\phi(x,T)\in\partial \Sb$ and, since the backup closed loop is strictly safe w.r.t. $\Sb$ and 0 is a regular value for $\hb$, we have $L_{\Fb} \hb(\phi(x,T))>0$. 
Thus \eqref{eq:safetyassumption} holds, and strict safety follows from Aubin’s strict invariance condition \cite{aubin1991viability}.

\smallskip
Having established (a)--(d), all hypotheses of Theorem~\ref{theo:compact} are satisfied, and this concludes the proof.
\end{proof}

{\color{black}
We note that conditions required by Corollary~\ref{cor:softmin_bcbf} can be verified directly from the system data, either analytically or numerically; the example in Section~V illustrates such a verification on a grid.
Beyond certifying $\tilde b_\theta$ as a CBF, Corollary~\ref{cor:softmin_bcbf} brings the practical recipe of Section~\ref{sec:compact} into the BCBF setting: once $\theta > \theta^*$ is fixed, a lower bound $\gamma^*$ can be computed from~\eqref{eq:gamma_star} for $\alpha(r) = \gamma r$. 
In the next section, we illustrate this procedure on a numerical example.
}

\begin{remark}
\label{rem:only-Sb-compact}
Corollary~\ref{cor:softmin_bcbf} relies on the compactness of the backup set $\Sb$; and we can still certify $\tilde b_\theta$ as a CBF even in the case $h$ defines an unbounded $\Sc$ (provided that Assumption~\ref{ass:BCBF} holds).
This relaxes the stronger (global) compactness assumptions found in the literature; e.g., \cite{gurriet2020scalable}.
If the backup set $\Sb$ is not compact, then we can still use Theorem~\ref{theo:unbounded} to certify $h$ as an eCBF, which would imply the feasibility of the safety constraint in~\eqref{eq:safetyfilter} all the same. We leave this as a topic for future work.
\end{remark}

\section{Example}
\label{sec:example}

\begin{figure}[t]
    \centering
\includegraphics[width=0.75\columnwidth]{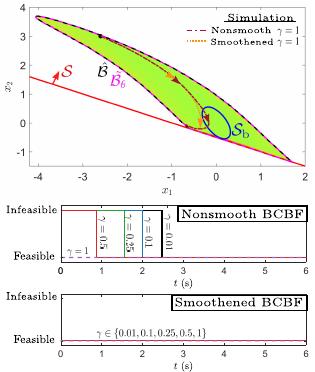}
    \caption{\color{black} (Top) Sets $\Sc$, $\Sb$, $\hat \Bc$ and $\tilde \Bc_\theta$ for the example, and simulation trajectories with safety filter \eqref{eq:safetyfilter} with $\hat b$ (brown) and $\tilde b_\theta$ (orange) for $\alpha(r) = \gamma r$. (Middle) Feasibility of the safety constraint over time for $\hat b$ with $\gamma \in \{0.01, 0.1, 0.25, 0.5, 1\}$: all $\gamma < 1$ exhibit intervals of infeasibility. (Bottom) Feasibility of the safety constraint over time for $\tilde b_\theta$: every run is feasible, consistent with the a priori certificate $\gamma^* > 0$. }
    \label{fig:figure}
    \vspace{-4 mm}
\end{figure}

{\color{black}
We illustrate the method on a double integrator
\begin{equation}
\label{eq:ex_system}
\dot x_1 = x_2, \quad \dot x_2 = u, \quad |u| \le u_{\max}  = 2,
\end{equation}
and the safe set
\begin{equation}
\label{eq:ex_S}
\Sc = \{x \in \R^2 ~|~ h(x) \ge 0\}, \quad h(x) = x_1 + 2 x_2 + 1.
\end{equation}
Note that $\Sc$ is unbounded, and $h$ is not a CBF for~\eqref{eq:ex_system} due to the input bound.
This motivates the BCBF.}

\noindent \textbf{Backup controller and backup set.}
{\color{black} 
We use a linear backup controller $\kb(x) = -x_1 - 2 x_2$, and to obtain a $C^1$ backup closed-loop, we saturate $\kb$ smoothly:}
\begin{equation}
\label{eq:ex_Fb}
\Fb(x) = \begin{bmatrix} x_2 \\ u_{\max}\tanh\!\bigl((-x_1 - 2 x_2)/u_{\max}\bigr) \end{bmatrix}.
\end{equation}
{\color{black} Solving the Lyapunov equation with $\Fb$ linearized around the origin and $Q=-I$ yields $P = \begin{bmatrix} 3/2 & 1/2 \\ 1/2 & 1/2 \end{bmatrix}$. We define the (compact) backup set as the Lyapunov sublevel set
\begin{equation}
\Sb = \{x \in \R^2 ~|~ \hb(x) \ge 0\}, \quad \hb(x) = \rho - x^\top P x,
\end{equation}
with $\rho = 0.1$. This choice guarantees $\Sb \subset \Sc$, $|\kb(x)| < u_{\max}$ on $\Sb$, and ensures that $\Fb$ is strictly safe w.r.t. $\Sb$.
}

\noindent \textbf{BCBF construction.}
{\color{black} 
With horizon $T = 2$ and discretization step $\Delta \tau = 0.1$, we construct the functions $\{b_i\}_{i=1}^{N}$ as in~\eqref{eq:BCFB_bi} with $N = 21$, and define $\hat b$, $\hat \Bc$ as in~\eqref{eq:BCBF_hatB}. 
The safety filter \eqref{eq:safetyfilter} with the nonsmooth $\hat b$ enforces the $N$ simultaneous constraints~\eqref{eq:BCBF_condition}. Existing nonsmooth BCBF formulations do not provide a constructive procedure for selecting the class-$\mathcal{K}$ functions $\alpha_i$, leaving the user to tune them by trial and error until a configuration yielding a feasible safety constraint is found.
To illustrate the consequences of this ad hoc selection, we simulate the safety filter ($k_{\rm des} = - u_{\rm max}$) with $\alpha(r) = \gamma r$ for $\gamma \in \{0.01, 0.1, 0.25, 0.5, 1\}$ from the initial condition $x_0 = (-2.5, 3)$; the feasibility of safety constraints in each run is reported over time in the middle panel of Fig.~\ref{fig:figure}. All choices with $\gamma < 1$ yield intervals of infeasibility (the trajectory for $\gamma = 1$ is shown in the top panel). }

\noindent \textbf{Smoothed BCBF.}
{\color{black} 
We replace the pointwise minimum with the soft-min $\tilde b_\theta$ as in~\eqref{eq:btilde}. Verifying the hypotheses of Corollary~\ref{cor:softmin_bcbf}: $\Sb$ is compact, $0$ is a regular value of $\hb$, $\Fb \in C^1$, $\Fb$ is strictly safe w.r.t. $\Sb$. 
Assumption~\ref{ass:BCBF} is checked numerically on a grid: at boundary points $x \in \partial \hat \Bc$ where an active constraint $i<N$ corresponds to $\phi(x,\tau_i) \in \Cc$, we evaluate $L_{\Fb} b_i(x)$ and find it strictly positive throughout, confirming $L_{\Fb} h(y) > 0$ on the sampled portion of $\Cc$. The case $i = N$ follows directly from the strict safety of $\Sb$, which yields $L_{\Fb} \hb(\phi(x,T)) > 0$ at such points.
}

{\color{black} 
We set $\varepsilon = \rho$ so that $T_\varepsilon = \hat \Bc$. The quantities $M, r, d$ from Lemma~\ref{lem:compact} and $L_{\rm max}$ from \eqref{eq:gamma_star} are evaluated numerically on a uniform grid over a box containing $\hat \Bc$.
Trajectories and sensitivities needed for $b_i$ and gradients $\nabla b_i$ are obtained by forward-integrating $\Fb$ together with the variational equation $\dot S = (\partial \Fb / \partial x)\, S$, $S(0) = I$, using a fixed-step Runge--Kutta scheme\footnote{Related code are in \href{github.com/anilalan-umich/smooth-bcbf}{https://github.com/anilalan-umich/smooth-bcbf}}.
This yields $M \approx 2.01$, $r \approx 0.027$, and $d \approx 0.005$, which via~\eqref{eq:thetastar_compact} give $\theta^* \approx 1451$; we set $\theta = 1524$, which gives a very close approximation of $\hat \Bc$ with $\tilde \Bc_\theta$. 
The resulting $L_{\rm max}$ is numerically zero as the backup controller can enforce a strict increase of $\tilde{b}_\theta$ everywhere on $\tilde \Bc_\theta$.
This means that \eqref{eq:gamma_star} is satisfied with any $\gamma > 0$. Crucially, this certifies that \emph{any linear class-$\mathcal{K}$ function renders the safety constraint in~\eqref{eq:safetyfilter} feasible on $\tilde \Bc_\theta$}, a conclusion that cannot be provided for the classical nonsmooth BCBF formulation a priori.
This is demonstrated in the bottom panel of Fig.~\ref{fig:figure}, where the safety filter \eqref{eq:safetyfilter} with $\tilde b_\theta$ is simulated from the same initial condition $x_0$ for each $\gamma \in \{0.01, 0.1, 0.25, 0.5, 1\}$ and the constraint remains feasible throughout in every run; the closed-loop trajectory for $\gamma = 1$ is shown in the top panel.
}

\section{Conclusion}
\label{sec:conclusion}
We have established uniform feasibility guarantees for CBF-based controllers such as safety filters using converse safety theorems when the safe set is constructed using smoothed minima of $C^1$ functions (via soft-min). For compact safe sets, an explicit parameter is designated for {certifying the soft-min function as a CBF}, which implies the feasibility of the CBF-based safety constraint. For noncompact sets, a set of tail conditions was shown to yield an extended CBF (a more relaxed formulation of CBF) with the same feasibility results. Projecting theoretical findings onto the BCBFs, we show a set of sufficient conditions for feasibility in BCBF formulation. 
Future work will focus on relaxing assumptions of compact backup set on the backup-reachable boundary of the safe set. 
{\color{black} Investigating the time-varying extension is another direction for future work.}

\bibliographystyle{IEEEtran}
\bibliography{_Aux/alan}

\end{document}